\documentclass[11pt]{amsart}

\usepackage{mathrsfs}
\usepackage{amssymb}
\usepackage{amsthm}
\usepackage{dsfont}
\usepackage{amssymb}
\usepackage{amsmath,amscd}
\usepackage[all,cmtip]{xy}
\usepackage{mathrsfs}
\usepackage{multirow}
\usepackage{hyperref}
\usepackage{graphicx}
\usepackage{amsfonts, amsthm}
\usepackage{fullpage}
\usepackage{enumerate}
\usepackage{appendix}
\usepackage{cite}



\normalfont\upshape

\theoremstyle{plain}
\newtheorem{thm}[subsection]{Theorem}
\newtheorem{lemma}[subsection]{Lemma}
\newtheorem{prop}[subsection]{Proposition}
\newtheorem{cor}[subsection]{Corollary}

\theoremstyle{definition}
\newtheorem{rmk}[subsection]{Remark}
\newtheorem{defn}[subsection]{Definition}
\newtheorem{example}[subsection]{Example}

\theoremstyle{remark}
\newtheorem{Claim}{Claim}

\let\a\alpha
\let\b\beta

\let\e\epsilon

\let\g\gamma

\let\l\lambda
\let\m\mu

\let\na\nabla
\let\p\psi
\let\o\omega
\let\r\rho
\let\s\sigma

\let\th\theta

\let\z\zeta
\def\scr{\mathscr}

\let\D\Delta

\let\G\Gamma

\let\O\Omega

\newcommand{\ra}{\longrightarrow}
\newcommand{\xa}{\xrightarrow}

\newcommand{\bGn}{G[p^n]\otimes_{\Z} H[p^n]}
\newcommand{\ro}{{\mbox{ or }}}

\newcommand{\stab}{\operatorname{stab}}
\newcommand{\im}{{\rm im}\:}

\newcommand{\DD}{{\mathbb D}}
\newcommand{\GG}{{\mathbb G}}

\newcommand{\Z}{{\mathbb Z}}
\newcommand{\C}{{\mathbb C}}
\newcommand{\Q}{{\mathbb Q}}
\newcommand{\R}{{\mathbb R}}

\newcommand{\cA}{\mathcal{A}}

\newcommand{\cF}{\mathcal{F}}
\newcommand{\cG}{\mathcal{G}}
\newcommand{\cE}{\mathcal{E}}

\newcommand{\cL}{{\mathcal{L}}}
\newcommand{\cO}{\mathcal{O}}
\newcommand{\cT}{\mathcal{T}}
\newcommand{\cV}{\mathcal{V}}

\newcommand{\sL}{{\mathscr L}}

\newcommand{\Spec}{{\mbox{Spec }}}

\newcommand{\cris}{{\mathrm{Cris}}}
\newcommand{\cri}{{\mathrm{cris}}}

\newcommand{\Aut}{{\mathrm{Aut}}}

\newcommand{\Rep}{{\mathrm{Rep}}}
\newcommand{\colim}{{\mathrm{colim}}}
\newcommand{\iso}{{\mathrm{Isocris}}}

\newcommand{\cha}{{\mbox{char }}}

\newcommand{\Pic}{{\text{Pic }}}

\newcommand{\bG}{{G}} 
\newcommand{\bH}{{H}} 

\newcommand{\univ}{{\mathrm{univ}}}
\newcommand{\Hom}{{\mathrm{Hom}}}

\newcommand{\Pp}{{(\s^*-\mathrm{Id})}}

\begin{document}

\title [ ]{A characterization of Mumford curves with good reduction }
\author{Jie Xia}
\maketitle


\begin{abstract}
Mumford defines a certain type of Shimura curves of Hodge type, parameterizing polarized complex abelian fourfolds. In this paper, we study the good reduction of such a curve in positive characteristic and give a characterization in the generically ordinary case.
\end{abstract}

\section{Introduction}
\subsection{Background}
This paper aims to characterize certain Shimura varieties of Hodge type with good reduction. This description will serve as the main example in our work on defining Shimura varieties in positive characteristic. 

Let $A$ be an abelian variety over $\C$. The elements in 
\[H^{2r}(A, \Q) \cap H^{r,r}(A)\] are called Hodge classes of $A$. The Hodge group of an abelian variety $A$ is the largest $\Q-$subgroup of $GL(H^1(A, \Q))$ which leaves all Hodge classes invariant. Mumford defines in \cite{famofav}, a Shimura variety of Hodge type as a moduli scheme of abelian varieties (with a suitable level structure) whose Hodge group is contained in a prescribed Mumford-Tate group, arising from a Hermittain symmetric pair==. 

Furthermore,  Mumford exemplifies Shimura curves of Hodge type in \cite{Mum}. He constructs a simple algebraic group $Q$ over $\Q$ which is the $\Q-$form of the real algebraic group \[SU(2)\times SU(2) \times SL(2), \] a cocharacter $h$ of $Q_\R$
\begin{align*}
h: \mathbb{S}_{m}(\R) \ra& Q(\R)\\ e^{i\th} \mapsto&I_4 \otimes \begin{pmatrix} \cos \th & \sin \th \\-\sin \th & \cos \th \\ \end{pmatrix}
\end{align*} 
and an eight dimensional absolute irreducible rational representation $V$ of $Q$. The pair $(Q,h)$ is a Shimura datum and with the representation $V$, it defines Shimura curves of Hodge type, parameterizing four dimensional polarized abelian varieties over $\C$.

Generalizing the construction, one is able to define Shimura curves of Hodge type parameterizing $2^m$ dimensional polarized abelian varieties (see Section \ref{Mumford curve}). We call such Shimura curves (with its universal family) \textit{Shimura curves of Mumford type}, or for simplicity \textit{Mumford curves}, denoted as $M$ and let $A\ra M$ be a universal family of abelian varieties over $M$. 

Mumford curves play a significant role among smooth Shimura curves of Hodge type. Specifically, Theorem 0.8 in \cite{Moller} shows the universal family over a Shimura curve has a strictly maximal Higgs field. Theorem 0.5 in \cite{Zuo} shows that up to powers and isogenies, the only smooth families of abelian varieties over curves with maximal Higgs field are Mumford curves.

Rutger Noot and Kang Zuo et al.\ have studied the reduction of a fiber of a Mumford curve(\cite{Zuo2}, \cite{Rutger}). They especially classify the possible Newton polygons of such a good reduction, which we will use in this paper. Their approaches both have the flavor of p-adic Hodge theory while we mainly use crystalline cohomology and deformation theory. 
\subsection{Main Result}

\begin{defn} \label{*p}
For any prime number $p$ and integer $m$, we say the pair $(\tilde X\ra \tilde C,k)$ satisfies $(*_{p,m})$ if it satisfies the following properties:
\begin{enumerate}
\item $k$ is an algebraically closed field of characteristic $p$,
\item $\tilde C$ is a proper smooth curve over $W(k)$ and $\tilde X \ra \tilde C$ is a family of abelian varieties of dimension $2^m$ over $\tilde C$, 
\item there exists a versally deformed height 2 Barsotti-Tate (BT) group ${\tilde G}$ and a height $2^m$ etale BT group ${\tilde H}$ over $\tilde C$ such that $\tilde X[p^\infty]\cong {\tilde G} \otimes {\tilde H}:= \colim_n ({\tilde G}[p^n]\otimes {\tilde H}[p^n])$, 
\item the reduction of $\tilde X \ra \tilde C$ at $k$ is generically ordinary.
\end{enumerate}
\end{defn}

As in \cite{Xia},  a height 2 BT group ${\tilde G}$ over $\tilde{C}$ is versally deformed if the Kodaira-Spencer map $T_{\tilde{C}} \rightarrow t_{\tilde G} \otimes t_{{\tilde G}^*}$ is an isomorphism, or equivalently the Higgs field $\theta_{\tilde G}$ (see Section \ref{the BT group}) is an isomorphism. 

\begin{thm} \label{main thm 1}
Let $A \ra M$ be a Mumford curve, parameterizing principally polarized abelian varieties of dimension $2^m$. For infinitely many primes $p$, there exists a pair $(\tilde X\ra \tilde C, k)$ satisfying $(*_{p,m})$ (see Definition \ref{*p}) and 
\[(\tilde X \ra \tilde C)\otimes_{W(k)} \mathbb{C} = (A \ra M).\]
\end{thm}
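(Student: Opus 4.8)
The plan is to produce the required pair by descending the Mumford curve to a number field, spreading it out over a ring of integers, and reducing at a carefully chosen infinite family of primes, and then to read off conditions (3) and (4) of Definition \ref{*p} from the representation-theoretic tensor structure underlying $Q$. Throughout, the four conditions of $(*_{p,m})$ and the base-change identity are verified one at a time.

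First I would invoke the theory of canonical models: the Shimura datum $(Q,h)$ together with the representation $V$ defines $M$ and the universal family $A \to M$ over the reflex field $E$, a number field (after adjoining a suitable level structure so that $M$ is a fine moduli scheme). Since $Q$ is $\Q$-anisotropic, its real points involving the compact factors $SU(2)^m$, the curve $M$ is proper. Spreading out, there is a finite set $S$ of primes and a smooth proper family $\cA \to \mathcal{M}$ over $\Spec \cO_{E,S}$ whose base change to $\C$ recovers $A \to M$. By Chebotarev, infinitely many rational primes $p \notin S$ split completely in $E$ and are unramified in the quaternion data defining $Q$; for each such $p$ choose a place $\mathfrak p \mid p$ of residue degree one, put $k = \overline{\F}_p$, and use the resulting embedding $\cO_{E,\mathfrak p} = \Z_p \hookrightarrow W(k)$ to form $(\tilde C, \tilde X) := (\mathcal{M},\cA)\otimes_{\cO_{E,\mathfrak p}} W(k)$. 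This yields a proper smooth curve $\tilde C$ over $W(k)$ and a family $\tilde X \to \tilde C$ of $2^m$-dimensional abelian varieties with $(\tilde X \to \tilde C)\otimes_{W(k)}\C = (A\to M)$, establishing conditions (1)--(2) and the required base change.

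The heart of the argument is condition (3). Over $\R$ one has $Q_\R \cong SU(2)^m \times SL(2,\R)$ and $V \cong W_1 \otimes \cdots \otimes W_m \otimes W_0$, a tensor product of the standard two-dimensional representations of the factors, in which only the $SL(2)$-factor $W_0$ carries a nontrivial Hodge cocharacter while the compact factors act on $W_1\otimes\cdots\otimes W_m$ through a constant polarized weight-zero structure. Consequently the variation of Hodge structure $R^1\pi_*$ attached to $A \to M$ factors as the tensor product of a rank-two varying piece coming from $W_0$ and a constant rank-$2^m$ unitary local system coming from $W_1\otimes\cdots\otimes W_m$. At the primes selected above the underlying quaternion order splits, so this tensor decomposition is defined integrally at $\mathfrak p$ and descends to a decomposition of Barsotti--Tate groups $\tilde X[p^\infty] \cong \tilde G \otimes \tilde H = \colim_n(\tilde G[p^n]\otimes \tilde H[p^n])$, where $\tilde G$ has height $2$ and $\tilde H$ has height $2^m$. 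Since the Hodge filtration on the constant factor is trivial, $\tilde H$ is the unit-root part and hence an \'etale BT group; and since $A \to M$ has strictly maximal Higgs field (Theorem 0.8 of \cite{Moller}), the Higgs field $\theta_{\tilde G}$ is an isomorphism on the $\C$-fiber, an open condition that therefore persists in characteristic $p$ for all but finitely many $p$, so $\tilde G$ is versally deformed in the sense of \cite{Xia}.

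Finally, for condition (4) I would compute Newton slopes through the tensor product: as $\tilde H$ is \'etale it has slope $0$, so $\tilde X$ is ordinary at a point of the special fiber exactly when $\tilde G$ is ordinary there, and a versally deformed height-$2$ BT group over $\tilde C\otimes k$ is generically ordinary precisely when its Hasse invariant is not identically zero. The classification of the possible Newton polygons of good reductions of Mumford curves (\cite{Zuo2}, \cite{Rutger}) shows that the ordinary polygon occurs as the generic polygon for infinitely many $p$; intersecting this with the previous density conditions leaves infinitely many primes $p$ for which $(\tilde X\to\tilde C,k)$ satisfies all of $(*_{p,m})$. I expect the main obstacle to be the integral tensor decomposition of condition (3): passing from the rational representation-theoretic splitting of $V$ to an honest isomorphism $\tilde X[p^\infty]\cong \tilde G\otimes\tilde H$ over the mixed-characteristic base $\tilde C$, with $\tilde G$ genuinely versal and $\tilde H$ genuinely \'etale, requires choosing the integral quaternion structure and the splitting primes so that the decomposition is simultaneously defined over $\Z_p$, compatible with the polarization, and uniform over the infinite family of $p$. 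Controlling this integrally, rather than merely up to isogeny or over $\Q_p$, is the delicate point.
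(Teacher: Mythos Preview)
The genuine gap is in condition (3). You correctly flag it as ``the delicate point,'' but the sentence ``the underlying quaternion order splits, so this tensor decomposition is defined integrally at $\mathfrak p$ and descends to a decomposition of Barsotti--Tate groups'' does not survive scrutiny. The tensor factors $W_i$ are defined only after extending scalars so that $Q$ splits; what descends to $W(k)$ is a decomposition of \emph{crystals} $\cE\cong\cV_1\otimes\cdots\otimes\cV_{m+1}$ (flat bundles without Frobenius), and the individual $\cV_i$ carry no $F$-structure a priori. The Frobenius $F:\cE^\sigma\to\cE$ is a single map on the full tensor product, and there is no reason it should respect the factorization: in the paper's analysis it turns out that $F=\bigotimes_i\phi_i$ with $\phi_i:\cV_i^\sigma\to\cV_{s(i)}\otimes\cL_i$ for a nontrivial permutation $s$ and nontrivial rank-one twists $\cL_i$. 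Establishing even this much requires a Tannakian argument (the $\cV_i$ have monodromy group $SL(2)$, and one must classify maps $SL(2)^{m+1}\to GL(2)^{m+1}$ compatible with the tensor representation). Then one needs generic ordinarity to force $s(1)=1$, and finally the surjectivity of $\sigma^*-\mathrm{Id}$ on $\Pic(C/W(k)_{\mathrm{cris}})$ to absorb the twist $\cL_1$ and produce an honest Frobenius on a rank-$2$ crystal $\cV$. None of these three steps is visible in your outline.

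A secondary issue: the factor $W_1\otimes\cdots\otimes W_m$ is not a ``constant'' local system. The Hodge structure on it is constant (the compact $SU(2)$ factors carry trivial Hodge cocharacter), but the monodromy is $SL(2)^m$, so the corresponding crystal $\cT$ is a genuinely nontrivial unit-root crystal and the associated $\tilde H$ is \'etale but not constant. This does not by itself break your argument, but it means you cannot read off ``$\tilde H$ \'etale'' from triviality of the local system; you need instead that $\cT$ is unit-root, which again comes out of the Frobenius decomposition above.
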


\begin{rmk}
When choosing $p$, it suffices to require that \begin{enumerate}
\item $p>2$, see \ref{p>2}
\item $M$ admits a good reduction at the place $p$, see \ref{Lefschetz principle}
\item the reflex field of $M$ is splitting over $p$, see \ref{intersects with ordinary}. 
\end{enumerate}
\end{rmk}

Since the reduction $X \ra C$ of $\tilde{X}\ra \tilde{C}$ at $k$ also admits the decomposition $X[p^\infty]\cong G \otimes H$, \ref{main thm 1} provides examples for Theorem 7.4 in \cite{Xia}.



\subsection{Structure of the paper}

The goal of the paper is to prove \ref{main thm 1}. In Section \ref{Mumford curve}, we introduce the basic definitions. By Lefschetz principle \ref{Lefschetz principle}, a Mumford curve with the universal family can descend to a Witt ring whose special fiber $X/C$ is smooth. The definition of Mumford curves implies the Dieudonne crystal $\DD(X/C)$ of the abelian scheme $X$ is a tensor product of $m+1$ rank 2 crystals: 
\[\DD(X)  \cong \cV_1 \otimes \cV_2 \otimes \cV_3 \cdots \otimes \cV_{m+1} .\] 

In Sections \ref{T category} and \ref{notation}, we set up some notation and basic facts of Tannakian categories. In Section \ref{some lemmas}, we prove two important lemmas (\ref{structure of Q_i}, \ref{decomposition of general F}) in context of abstract Tannakian categories. They are key ingredients in determining the Tannakian groups of the rank 2 isocrystals  and their Frobenius pullback.

In Section \ref{tensor decomposition of E}, we describe the structure of $\cV_i$ in the terminology of Tannakian categories. It is shown in \ref{the structure of G} that the Tannakian group of each isocrystals $\cV_i$ is $SL(2)$. 
Furthermore, we investigate the tensor decomposition of the Frobenius morphism $F$. Firstly, it is shown in  \ref{descent to W(k)} that $F$ can be decomposed to the tensor product of $\phi_i$ which are morphisms between rank 2 crystals.  Secondly, imposing the generically ordinary assumption permits a refinement, i.e. the permutation $s$ in \ref{W,V} fixes an index, say 1. Lastly, we adjust $\phi_1$ to be an actual Frobenius morphism of a rank 2 crystal. That requires proving that $\s^*-\rm{Id}$ on $\Pic(C/W(k)_\cri)$ is surjective. This step is in Section \ref{the surjectivity}.

Summarizing the above results in Section \ref{the crystals}, we  construct a rank 2 Dieudonne crystal $\cV$ and a rank $2^m$ unit root crystal $\cT$ , such that $\DD(\tilde X / \tilde C)\cong \cV \otimes \cT$.  We conclude the proof of \ref{main thm 1}  in Section \ref{the BT group} by studying the BT groups corresponding to $\cV$ and $\cT$. 


\subsection*{Acknowledgment}
 I would like to express my deep gratitude to my advisor A.J. de Jong, for suggesting this project to me and for his patience and guidance throughout its resolution. This paper would not exist without many inspiring discussions with him.  I also thank Vivek Pal for grammar checking.
\section{Mumford curves and their reduction} \label{Mumford curve}
We review the generalization of Mumford's construction, following \cite{Zuo}.
\subsection{Mumford curves over $\C$} 

Let $K$ be a totally real field of degree $m+1$ and $D$ be a quaternion division algebra over $K$ which splits only at one real place and $\rm{Cor}_{K/\Q}(D)=M_{2^{m+1}}(\Q)$. In this case $D\otimes_\Q \R \cong \mathbb{H} \times \cdots \times \mathbb{H} \times M_2(\R)$ and $m$ is even.

Let $\bar \ $ be the standard involution of $D$, and let 
\[Q=\{x\in D^* | x\bar x=1\}. \]
Then $Q$ is a simple algebraic group over $\Q$ which is the $\Q-$form of the real algebraic group 
\[SU(2)^{\times m} \times SL(2,\R).\] 
Since $\rm{Cor}_{K/\Q}(D)=M_{2^{m+1}}(\Q)$, $Q$ admits a natural $2^{m+1}$ dimensional rational representation $V$ whose real form is 
\[\r: SU(2)^{\times m} \times SL(2) \ra SO(2^m)\times SL(2) \text{ acting on }  \R^{2^{m+1}}. \]
Note $Q_\C=SL(2,\C)^{ \times m+1}$. Then $V_\C$ is the tensor of $m+1$ copies of standard representation $\C^2$ of $SL(2,\C)$: 
\begin{equation} \label{the tensor decomposition of V}
V_C=\C^2 \otimes \C^2 \cdots \otimes \C^2\ \ \ (m+1 \text{ factors})
\end{equation} 

Let \begin{align*}
h: \mathbb{S}_{m}(\R) \ra& Q(\R)\\ e^{i\th} \mapsto&I_{2^m} \otimes \begin{pmatrix} \cos \th & \sin \th \\-\sin \th & \cos \th \\ \end{pmatrix}.
\end{align*} 
Then $(Q,h)$ defines a Shimura datum. Generically $\r(Q)$ is the Hodge group of $V$. 

Let $\stab(h)\subset Q_\R$ be the stabilizer of $h$. Then $\stab(h)$ is a maximal compact subgroup of $Q_\R$ and hence conjugate to $SO(2) \times SU(2)^{\times m}$. So $Q_\R/\stab(h)\cong Sp(1,\R)/SO(2, \R)\cong \frak{h}$ the upper half plane. Since $\ker \r\subset \stab(h)$, we have 
\[\r(Q)/\stab(\r\circ h)=Q_\R/\stab(h)=\frak{h}.\]
Let $\G \subset Q_\R$ be an arithmetic subgroup such that $\G$ acts freely and properly discontinuous on $\frak{h}$. Note $\ker \r \subset Z(Q)$ and then it fixes $h$, $\G\hookrightarrow \r(Q(\R)).$ 

The (one-dimensional) Shimura varieties defined by $(Q,h)$ are called \textsl{Mumford curves}. With a small enough level structure $\G$, a Mumford curve is proper and smooth. It is a Shimura curve of Hodge type, parameterizing a family $A$ of polarized abelian varieties of dimension $2^m$. In particular, we can view the space $V$ as a $\Q-$local system over $M$.

As \ref{main thm 1} indicates, we study the good reduction of $A \ra M$ in this paper.
\subsection{Monodromy}
Since $\frak{h}$ is simply connected, $\pi_{1}(M)=\G$. The local system $V$ induces a monodromy $\G \ra \Aut(V_\C)$. Further, the tensor components $\C^2$ of $V_\C$ also admit  representations of $\G$ and hence also monodromy. Since $\G_\C\subset Q_\C \cong SL(2)^{\times 3}$, $\wedge^2 \C^2$ is a trivial representation of $\G$.

\begin{defn}
For any monodromy $\G \ra GL(n)$, the \textsl{algebraic monodromy group} is defined to be the Zariski closure of the image of the monodromy. The \textsl{connected algebraic monodromy group} is the connected component of the identity of the algebraic monodromy. 
\end{defn} 

\begin{prop} \label{over C}
The algebraic monodromy group induced by $\C^2$ in (\ref{the tensor decomposition of V}) is $SL(2,\C)$ and that of $V_\C$ is the image of $SL(2,\C)^{\times m+1}$ in $\Aut(V_\C)$.
\end{prop}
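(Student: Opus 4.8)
The plan is to reduce both assertions to a single statement, namely that $\G$ is Zariski dense in $Q_\C \cong SL(2,\C)^{\times(m+1)}$, and then to push this density forward through the relevant representations. Recall that the monodromy on $V_\C$ is the composite of the inclusion $\G \hookrightarrow Q(\R) \hookrightarrow Q(\C)$ with the algebraic representation $V\colon Q_\C \to \Aut(V_\C)$, while the monodromy on the $i$-th tensor factor $\C^2$ is the composite of the same inclusion with the $i$-th projection $\pi_i\colon Q_\C \to SL(2,\C)$. Both $V$ and each $\pi_i$ are homomorphisms of algebraic groups, so by Chevalley their images are Zariski closed. For any such homomorphism $\varphi$ and any Zariski dense subset $\G \subset Q_\C$ one has $\overline{\varphi(\G)} = \varphi(Q_\C)$: indeed $\varphi^{-1}\bigl(\overline{\varphi(\G)}\bigr)$ is closed and contains $\G$, hence equals $Q_\C$, so that $\varphi(Q_\C) \subseteq \overline{\varphi(\G)} \subseteq \varphi(Q_\C)$. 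Granting the density, this immediately gives that the algebraic monodromy group of $V_\C$ is $V(Q_\C)$, i.e.\ the image of $SL(2,\C)^{\times(m+1)}$ in $\Aut(V_\C)$, and that the algebraic monodromy group of each factor $\C^2$ is $\pi_i(Q_\C) = SL(2,\C)$, since $\pi_i$ is surjective.

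It then remains to establish that $\G$ is Zariski dense in $Q_\C$, and here I would invoke the Borel density theorem. The two inputs are that $Q$ is $\Q$-simple, which is already recorded above and follows from $D$ being a division algebra over the totally real field $K$ (so that $Q$ is $\Q$-almost-simple, being a restriction of scalars of an absolutely almost simple $K$-group), and that $Q(\R) \cong SU(2)^{\times m} \times SL(2,\R)$ is noncompact because of the split factor $SL(2,\R)$. In the form valid for a $\Q$-simple semisimple group with noncompact real points, Borel density asserts that every arithmetic subgroup is Zariski dense. Since $\G \subset Q(\Q)$, its Zariski closure is defined over $\Q$ and is a $\Q$-subgroup of $Q$; the theorem forces it to be all of $Q$, and therefore $\overline{\G}^{\,\mathrm{Zar}} = Q_\C$.

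The delicate point, and the step I would take the most care over, is exactly this density claim. One cannot apply the Borel density theorem directly to the real group $Q(\R)$, because $Q(\R)$ carries the compact factors $SU(2)^{\times m}$, and a lattice in a group with compact factors can fail to be Zariski dense. What rescues the argument is the $\Q$-simplicity of $Q$: as a $\Q$-group it does not split off these compact factors, the Galois action permuting the archimedean places of $K$ mixes them, and consequently $\G$ cannot be confined to a proper $\Q$-subgroup. This is precisely why the hypothesis enters through the $\Q$-structure rather than through the real points, and it is the only genuinely nonformal ingredient; the passage from density over $\Q$ to density over $\C$, and the push-forward along $V$ and the $\pi_i$, are both formal as explained in the first paragraph.
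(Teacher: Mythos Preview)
Your proof is correct and takes a genuinely different route from the paper's. The paper does not invoke Borel density; instead it appeals to a Hodge-theoretic result of Andr\'e, namely that for a polarized variation of Hodge structure the connected algebraic monodromy group is a normal subgroup of the generic Mumford--Tate (Hodge) group. Since the Hodge group here is $\rho(Q)$, which is $\Q$-simple, this forces the connected monodromy of $V_\Q$ to be all of $\rho(Q)$. The paper then argues factor by factor: each $K_i \subset SL(2,\C)$ because $\wedge^2 \C^2$ is a trivial $\Gamma$-representation, and a dimension comparison between $\im(\prod_i K_i \to \Aut(V_\C))$ and $\rho(Q)_\C = \im(SL(2,\C)^{m+1} \to \Aut(V_\C))$ forces each $K_i$ to equal $SL(2,\C)$.

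Your approach is more direct: once $\Gamma$ is Zariski dense in $Q_\C$, both assertions drop out immediately by pushing forward along the algebraic homomorphisms $\pi_i$ and $V$, with no need for the auxiliary observation about $\wedge^2 \C^2$ or the factor-by-factor dimension count. It also avoids any Hodge-theoretic input. The paper's route, while slightly more roundabout, is natural in the Shimura-variety context and uses a template (monodromy normal in the Mumford--Tate group) that generalizes well. The substantive input is the same in spirit---$\Q$-simplicity of $Q$ forces the monodromy to be as large as possible---but it enters through Andr\'e's normality theorem in the paper and through the $\Q$-simple form of Borel density in your argument. Your identification of the delicate point, that the compact factors $SU(2)^{\times m}$ of $Q(\R)$ obstruct a naive application of Borel density and that $\Q$-simplicity is what rescues it, is exactly right.
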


\begin{proof}
From above, the monodromy induced by the representation of $Q$, is tensor of $m+1$ copies of monodromies $\C^2$. Let $K_i, 1\leq i\leq n+1$ be the corresponding algebraic monodromy groups. Since $\wedge^2 \C^2$ is a trivial representation of $\G_\C$, $K_i \subset SL(2, \C)$. 

By \cite{Andre}, the connected algebraic monodromy on $V_{\Q}$ is a normal subgroup in the Hodge group $\r(Q)$. Since $Q$ is simple, $\r(Q)$ is also simple over $\Q$. Thus the connected algebraic monodromy is $\r(Q)$. Since $\r(Q)_\C=\im(SL(2,\C)^{\times m+1}\ra \Aut(V_\C))$ is connected, the connected complex  algebraic monodromy of $V_\C$ is $\im(SL(2,\C)^{\times m+1}\ra \Aut(V_\C))$. 

Note the complex algebraic monodromy of $V_\C$ is $\im(\prod_i K_i\ra \Aut(V_\C))$. Therefore 
\[\im(\prod_i K_i\ra \Aut(V_\C))^{o}=\im(SL(2,\C)^{\times m+1}\ra \Aut(V_\C)). \] Then necessarily, $K_i=SL(2,\C)$ for each $i$.
\end{proof}

\subsection{Lefschetz Principle }  \label{Lefschetz principle}
By Lefschetz Principle (see \cite{Lefschetz}), we mean the process that all the coefficients of polynomials, defining a variety of finite type over a field, generate a subring $R$ of finite type over $\Z$, such that the variety can be defined over $R$.  Note this process can be easily generalized to morphisms of finite type or vector bundles of finite rank.

Apply Lefschetz Principle to $A \ra M$ and the flat vector bundles induced by $\C^2$. We obtain these data can descend from $K$ to a ring $R$ finite type over $\Z$. Throwing away finite places, we can assume $R$ is smooth over $\Z$.  Let $k$ be a residue field of $R$ with characteristic$>2$ such that $M$ admits a good reduction over $k$. By smoothness of $R$, we have the a lifting from $\Spec W_n(k)$ to $\Spec R$:
\[\xymatrix{
\Spec k \ar[r] \ar[d] & \Spec R \ar[d]\\
\Spec W_n(k) \ar[r] \ar@{-->}[ru]& \Spec \Z
}
\]
Therefore we find a morphism $\Spec W(k) \ra \Spec R$.

Let $\tilde X \xa{\tilde \pi} \tilde C$ (resp. $\cV_i$) be the base change $A \ra M$ (resp. the flat vector bundles) from $\Spec R$ to $\Spec W(k)$. Let $X, C$ be the special fiber of $\tilde X, \tilde C$.
Let $\cE$ be the Hodge bundle $\cE=R^1\tilde \pi_*(\O^._{\tilde X/\tilde C})$ and $\cE$ admits the Gauss-Manin connection. By \cite[Theorem 6.6]{Berthelot}, the category of crystals on $C$ is equivalent to the category of modules with an integrable connection (MIC).  In particular, the Hodge bundle $\cE$ corresponds to the Dieudonne crystal $R^1\pi_{*,\cri} (\cO_{X})$. Let us denote the crystal still as $\cE$. The vector bundles $\cV_i$ also correspond to crystals and denote the corresponding crystals as $\cV_i$ as well. Then as crystals
\[\cE \cong \cV_1 \otimes \cV_2 \otimes \cV_3 \cdots \otimes \cV_{m+1}.\]

\section{Tannakian Category  } \label{T category}

In this section, we review some basic constructions and facts regarding Tannakian categories that we will need later. 

\begin{defn} Let $L$ be a field of characteristic 0.
A Tannakian category $T$ (over $L$) is a $L-$linear neutral rigid tensor abelian category with an exact fiber functor $\o: T\ra \rm{Vect}_L $. 
\end{defn}

\begin{thm} (\cite[Theorem 2.11]{Deli}) \label{Deli}
For any Tannakian category $T$, there exists an $L-$algebraic group $G$ such that $T$ is equivalent to $\Rep_L(G)$ as tensor categories. 
\end{thm}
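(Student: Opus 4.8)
The plan is to realise $G$ explicitly as the automorphism group of the fibre functor and then identify $T$ with its representations. First I would define, for every commutative $L$-algebra $R$, the group
\[
G(R) = \Aut^\otimes(\o_R), \qquad \o_R = (-\otimes_L R)\circ \o,
\]
of tensor automorphisms of the fibre functor after extension of scalars to $R$; that is, families $\l_X \in \Aut_R(\o(X)\otimes_L R)$ natural in $X$ and compatible with the tensor constraints. By construction $G$ acts tautologically on each $\o(X)$, compatibly with tensor products, duals and morphisms, so $X \mapsto \o(X)$ upgrades to an $L$-linear tensor functor $\o^{G}\colon T \to \Rep_L(G)$. The theorem then reduces to two assertions: that the functor $R\mapsto G(R)$ is representable by an affine group scheme, and that $\o^{G}$ is an equivalence of tensor categories.

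For representability I would use the coalgebra reconstruction of Saavedra Rivano and Deligne. Since $\o$ is faithful, exact and $L$-linear with values in finite-dimensional spaces (finite-dimensionality following from rigidity), the object
\[
A \;=\; \varinjlim_{X}\; \o(X)^\vee \otimes_L \o(X)
\]
(the coend over all objects $X$ of $T$, with transition maps induced by morphisms) is naturally a coalgebra over $L$, the comultiplication and counit arising from evaluation and coevaluation on the spaces $\o(X)$. The technical heart of the argument is to prove that $\o$ induces an equivalence of $T$ with the category $\mathrm{Comod}_A^{\mathrm{fd}}$ of finite-dimensional $A$-comodules, each $X$ going to $\o(X)$ with its canonical coaction $\o(X)\to \o(X)\otimes_L A$. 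Faithfulness and exactness of $\o$ yield full faithfulness and control of kernels and images, while essential surjectivity uses that every finite-dimensional comodule is a subquotient of a sum of copies of $A$, which is recovered from the objects of $T$.

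Next I would transport the rigid tensor structure on $T$ across this equivalence to endow $A$ with the structure of a commutative Hopf algebra: the isomorphisms $\o(X)\otimes\o(Y)\cong\o(X\otimes Y)$ furnish the algebra multiplication, the unit object gives the unit, and, crucially, rigidity (the existence of duals $X^\vee$ in $T$) produces the antipode; equivalently, rigidity forces every tensor endomorphism of $\o_R$ to be invertible, so that $\End^\otimes(\o_R)=\Aut^\otimes(\o_R)$ and $G$ is a group rather than merely a monoid. Setting $G = \Spec A$, comodules over $A$ are precisely finite-dimensional representations of $G$, so $\mathrm{Comod}_A^{\mathrm{fd}} = \Rep_L(G)$, and one checks that this $G$ represents $R\mapsto \Aut^\otimes(\o_R)$. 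Composing with the equivalence of the previous step identifies $\o^{G}\colon T \to \Rep_L(G)$ with an equivalence of tensor categories.

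The main obstacle is the reconstruction step: showing that $A$ is a well-defined coalgebra and that $\o$ lands in an equivalence $T \simeq \mathrm{Comod}_A^{\mathrm{fd}}$. This is where all the axioms of a Tannakian category enter essentially — $L$-linearity and finite-dimensionality of the fibres to form $A$, exactness and faithfulness of $\o$ for full faithfulness, the abelian structure to realise arbitrary comodules as subquotients, and rigidity to produce the antipode. Verifying that these combine to give both representability and essential surjectivity is the delicate part; by comparison, the construction of $\o^{G}$ and the transfer of the tensor structure are formal.
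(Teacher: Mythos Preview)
The paper does not prove this theorem at all: it is stated as a citation of \cite[Theorem 2.11]{Deli} and used as a black box. Your proposal is a correct outline of the standard Saavedra--Deligne reconstruction argument (define $G=\underline{\Aut}^\otimes(\o)$, build the coend coalgebra $A$, use rigidity for the antipode, identify $T\simeq \mathrm{Comod}_A^{\mathrm{fd}}=\Rep_L(G)$), so you have supplied strictly more than the paper does and there is nothing further to compare.
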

We mainly use the following two special Tannkian categories. 
\begin{example} \label{MIC}
Choose a point $c\in M$,  the category of all MIC on $M$ with fiber functor $\cF \ra \cF_c$ form a Tannakian category. By \ref{Deli}, it corresponds to $\Rep_\C(G_{\univ})$. 

By Riemann-Hilbert correspondence, the category of MIC on $\tilde{C}$ is equivalent to $\Rep(\pi_1(M))$. The algebraic group $G_{\univ}$ can be constructed from $\pi_1(M)$ by the following: 
\[G_{\univ}=\lim H\]
 where $H$ lists the Zariski closure of image of $\pi_1(M)$ in $GL(W)$ for all complex representations $W$. Note the system of $H$ is projective. So the image of $G_{\univ} \ra \Aut(W)$ is exactly the Zariski closure of the image of $\pi_1(M)$ in $GL(W)$. 
\end{example}
Let $B(k)$ be the fraction field of $W(k)$.
\begin{example} (\cite[VI 3.1.1, 3.2.1]{Saa} ) \label{T and iso}
Inverting $p$ in the category $\cris(C/W(k))$, we obtain the category of isocrystals $\iso(C/W(k))$. Similar to \ref{MIC}, the category $\iso(C/W(k))$ forms a Tannakian category over $B(k)$, with fiber functor associated to a $k-$point of $C$.  So there exists a $B(k)-$affine group scheme $P_{\univ}$ such that the following two categories are equivalent. 
\[\{\text{finite locally free isocrystals on } C/W(k) \} \longleftrightarrow \Rep_{B(k)}(P_{\univ}).\]
An object $\cF'$ in $\iso(C/W(k))$ is called effective if it is from an object $\cF$ in $\cris(C/W(k))$,i.e. $\cF' =\cF \otimes B(k)$. For any morphism $f: \cF\otimes B(k) \ra \cG \otimes B(k)$ between effective objects in $\iso(C/W(k))$, there exists $m\in \Z$ such that $p^m f: \cF \ra \cG$ is a morphism in $\cris(C/W(k))$. 

Note different from \cite[VI 3.1.1, 3.2.1]{Saa}, $\iso(C/W(k))$ denotes just the isocrystals, not the $F-$isocrystals. So $P_{\univ}$ is an affine group scheme over $B(k)$, note over $\Q_p$.
\end{example}
 We conclude this section by a simple result. 
\begin{prop} \label{the inclusion}
For any Tannakian category $T$ and $W, V \in T$, let $<W>$ denote the Tannakian subcategory generated by $W$, with Tannakian group $G_W$. Similarly, $<V>=\Rep_k(G_V),<W,V>=\Rep_k(K)$. Then there exists a natural injection $K \hookrightarrow G_W\times G_V$.
\end{prop}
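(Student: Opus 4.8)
The plan is to make the three Tannakian groups completely explicit via Theorem~\ref{Deli} and then read off the injection inside an ambient product of general linear groups. Write $T\cong\Rep_k(G)$ for an affine group scheme $G$ over $k$, with fiber functor $\o$, and for an object $X$ let $\rho_X\colon G\to GL(\o(X))$ denote the associated representation. The first step is the standard description of a monogenic Tannakian subcategory: $\langle X\rangle$ is the full subcategory of subquotients of finite direct sums of $X^{\otimes a}\otimes(X^\vee)^{\otimes b}$, and its Tannakian group relative to the restricted fiber functor is the image $\im\rho_X=G/\ker\rho_X\subseteq GL(\o(X))$. Applying this to $W$, to $V$, and to $W\oplus V$ (using $\langle W,V\rangle=\langle W\oplus V\rangle$) identifies
\[
G_W=\im\rho_W,\qquad G_V=\im\rho_V,\qquad K=\im\rho_{W\oplus V}.
\]

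Next I would observe that $G$ preserves each summand of $\o(W\oplus V)=\o(W)\oplus\o(V)$, so $\rho_{W\oplus V}$ factors through the block-diagonal subgroup and is exactly the map $(\rho_W,\rho_V)\colon G\to GL(\o(W))\times GL(\o(V))$. Hence $K=\im(\rho_W,\rho_V)$ sits naturally inside $GL(\o(W))\times GL(\o(V))$. Composing with the first coordinate projection recovers $\rho_W$, so the image of $K$ under $\mathrm{pr}_1$ is $\im\rho_W=G_W$; thus $K\subseteq G_W\times GL(\o(V))$, and symmetrically $K\subseteq GL(\o(W))\times G_V$. Intersecting these two closed subgroups inside the product yields the desired closed immersion $K\hookrightarrow G_W\times G_V$. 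It is natural in that its two components are precisely the homomorphisms $K\to G_W$ and $K\to G_V$ dual to the inclusions $\langle W\rangle,\langle V\rangle\hookrightarrow\langle W,V\rangle$ (these are faithfully flat because the subcategories are closed under subquotients).

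Since the concrete picture makes injectivity essentially immediate, I do not anticipate a substantive obstacle; the statement is formal. The one point deserving care is the first step, namely that the Tannakian group of $\langle X\rangle$ is genuinely the image $\im\rho_X$ and not merely a group surjecting onto it—this rests on the fact that, $G$ being faithfully flat over $\im\rho_X$, the $G$-subquotients of a representation pulled back from $\im\rho_X$ coincide with its $\im\rho_X$-subquotients. Alternatively, one could argue entirely within the category using \cite[Prop.~2.21]{Deli}: the projections $p_W\colon K\to G_W$ and $p_V\colon K\to G_V$ are faithfully flat, and the induced map $(p_W,p_V)$ is a closed immersion once one checks that every object of $\langle W,V\rangle$ is a subquotient of an object in the image of $\Rep_k(G_W\times G_V)$. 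That last verification is the only real work in the abstract approach, and it follows by using the symmetry isomorphism to rearrange any tensor monomial in $W,W^\vee,V,V^\vee$ into an external product of an object of $\langle W\rangle$ with an object of $\langle V\rangle$.
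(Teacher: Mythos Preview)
Your proposal is correct. In fact, your ``alternative'' approach via \cite[Prop.~2.21]{Deli} is exactly the paper's proof: the paper simply notes that $K$ surjects onto each of $G_W$ and $G_V$ (giving the map $K\to G_W\times G_V$), and then invokes the closed-immersion criterion of \cite[2.21(2)]{Deli} to conclude injectivity. Your primary approach is a more explicit variant, realizing all three groups as images inside $GL(\o(W))\times GL(\o(V))$ so that the inclusion $K\subseteq G_W\times G_V$ is read off directly without appealing to the subquotient criterion; this has the advantage of making the injection transparent at the cost of first justifying the identification of $G_X$ with $\im\rho_X$, which you flag correctly.
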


\begin{proof}
Since $W, V\in \Rep(K)$, by (\cite[2.21]{Deli}), $K$ admits surjections onto $G_W$ and $G_V$. Then $K$ admits a map $K \ra G_W\times G_V$. The induced morphism $\Rep(G_W \times G_V) \ra \Rep(K)$ satisfies (\cite[2.21(2)]{Deli}). So the map is injective. 
\end{proof}

\section{Notation} \label{notation}
We summarize the notation and fix them till the end. 
\begin{itemize} \itemsep 4pt
\item By (iso)crystals over $C$, we always mean (iso)crystals in vector bundles over the crystalline cite $\cri(C/\Z_p)$. 
\item We use subscript $C$ to denote the reduction of an object or a morphism from $\tilde{C}$ to $C$. For instance, $\cE_C$ naturally means the associated vector bundle over $C$ from the crystal $\cE$ and $F_C$ is just the restriction of the morphism $F: \cE^\s \ra \cE$ to $C$. 
\item[$B(k)$] the fractional field of $W(k)$.
\item[$Q$]the reductive group defining the Mumford curves.
\item[$\tilde X\ra \tilde C$ ] the descent of the Mumford curve with the family of abelian varieties $A/M$ to $\Spec W(k)$.
\item[$\s$] the absolute Frobenius on $C$.
\item[$\cE, \cV^\s_i, \cV_i$] $\cE=R^1\pi_{\cri,*}(\cO_{X})$, $\cE\cong \cV_1 \otimes \cV_2 \otimes \cV_3 \cdots \otimes \cV_{m+1}$ and $\cE^\s\cong \cV^\s_1 \otimes \cV^\s_2 \otimes \cV^\s_3\cdots \otimes \cV^\s_{m+1}$ . 
\item[$P_\univ$] the Tannakian group of the category of finitely locally free isocrystals on $\tilde C/W(k)$.
\item[$E,W_i,V_i$] $B(k)-$representations of $P_\univ$ corresponding to $(\cE, \cV^\s_i, \cV_i)$, respectively.
\item[$P$]the Tannakian group of the subcategory generated by $\cE$, i.e. $\im(P_\univ \ra \Aut(E))$ .
\item[$P_i$] the Tannakian group of the subcategory generated by $\cV_i$, i.e. $\im(P_\univ \ra \Aut(V_i))$.
\item[$Q_i$]  the Tannakian group of the subcategory generated by $\cV^\s_i$, i.e. $\im(P_\univ \ra \Aut(W_i))$.
\item[$Q^0_i$] the connected component of the identity in $Q_i$.
\item[$P'$] the Tannakian group of the subcategory generated by $\{\cE^\s\}$.
\item[$Q'$] $Q'=\im(P_\univ \ra \prod Q_i)$.
\item[$K_{12}$] the Tannakian group of the subcategory generated by $\{\cV^\s_1, \cV_2\}$.
\end{itemize}
\section{Some Important lemmas on Tannakian categories} \label{some lemmas}

In this section, we prove some lemmas about general Tannakian categories. These lemmas will be applied to proving \ref{main thm 1} in the next section. 
\begin{lemma} \label{dim>2}
For any $g\in GL(2)$, the centralizer $Z(g)$ of $g$ has dimension $\geq 2$ as a variety.  
\end{lemma}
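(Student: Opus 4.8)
The plan is to exhibit inside the centralizer $Z(g)$ an explicit two-dimensional subvariety, namely the invertible part of the linear span of $I$ and $g$. Since the dimension of $Z(g)$ is invariant under conjugating $g$, I may reason over the ground field $k$ after replacing $g$ by any conjugate, and I would split the argument into two cases according to whether $g$ is a scalar matrix.

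First, if $g = \lambda I$ for some scalar $\lambda$, then every matrix commutes with $g$, so $Z(g) = GL(2)$ and $\dim Z(g) = 4$; in this degenerate case the bound is immediate and nothing more is needed.

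In the main case $g$ is not scalar, so $I$ and $g$ are linearly independent and span a two-dimensional linear subspace $V = \{aI + bg : a,b \in k\}$ of the affine space $M_2 \cong \mathbb{A}^4$ of $2\times 2$ matrices. The key point is that every element of $V$ commutes with $g$, because $(aI + bg)g = ag + bg^2 = g(aI + bg)$; hence $V \cap GL(2) \subseteq Z(g)$. I would then observe that $\det$ restricts to a polynomial on $V \cong \mathbb{A}^2$ in the coordinates $(a,b)$ which is not identically zero, since it takes the value $1$ at the point $(a,b)=(1,0)$ corresponding to $I$. Consequently its non-vanishing locus $V \cap GL(2)$ is a nonempty Zariski-open, hence dense, subvariety of the irreducible surface $V$, so it has dimension $2$. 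Combining the inclusion $V \cap GL(2) \subseteq Z(g)$ with this count yields $\dim Z(g) \geq 2$.

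The argument is short and I anticipate no serious obstacle; the only subtlety is remembering to separate out the scalar case, where the span of $I$ and $g$ degenerates to a line and one must instead invoke the equality $Z(g) = GL(2)$. One could alternatively argue uniformly via $\dim Z(g) = \dim GL(2) - \dim(\text{conjugacy class of } g)$ together with a Jordan-form classification of elements of $GL(2)$, but the direct span argument has the advantage of avoiding any case analysis of canonical forms.
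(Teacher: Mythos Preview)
Your proposal is correct, and it takes a different but equally elementary route from the paper. The paper writes out the matrix equation $gh=hg$ for $h=\begin{pmatrix} x & y \\ z & w\end{pmatrix}$ and observes that the resulting scalar equations reduce to (essentially) two relations, $bz=cy$ and $(a-d)y=b(x-w)$; since $\dim GL(2)=4$, this codimension count gives $\dim Z(g)\geq 2$. You instead exhibit an explicit two-dimensional subvariety of $Z(g)$, namely the invertible part of $\mathrm{span}(I,g)$, after disposing of the scalar case separately. Your argument is arguably cleaner: it avoids having to verify that the four a priori commutation equations actually collapse to two (there is a third equation $c(x-w)=(a-d)z$ which is dependent on the others only generically), and it makes transparent \emph{which} two-dimensional piece lies in the centralizer. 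The paper's approach has the minor advantage of being a one-case computation with no need to single out scalar $g$, but at the cost of a slightly informal equation count.
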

\begin{proof} Let $g=\begin{pmatrix}
a & b\\
c&d
\end{pmatrix}$. The centralizer of $g$ 
\[
\begin{pmatrix}
a & b\\
c&d
\end{pmatrix}
\begin{pmatrix}
x & y\\
z & w
\end{pmatrix}=
\begin{pmatrix}
x & y\\
z & w
\end{pmatrix}
\begin{pmatrix}
a & b\\
c&d
\end{pmatrix}
\] implies \[bz=cy, (a-d)y=b(x-w).\] Note $\dim GL(2)=4$. As a subvariety of $GL(2)$, $Z(g)$ has dimension at least 2. 
\end{proof}

The Tannakian category of isocrystals on $C/W(k)$ is equivalent to $\Rep(P_\univ)$.
\begin{lemma} \label{structure of Q_i}
Let $W_i, V_i \in \Rep(P_\univ)$ be representations over $B(k)$, $1\leq i \leq n$.  Let $E\cong \otimes_i V_i$, $P_i=\im(P_\univ \ra \Aut(V_i))$ and $Q_i =\im(P_\univ \ra \Aut(W_i))$. Suppose we have that 
\[F: W_1 \otimes \cdots \otimes W_{m+1} \ra V_1 \otimes \cdots \otimes V_{m+1} \]
is an isomorphism between representations and $P_i=SL(2)$ for each $i$. Then 
\[Q_i =   GL(2)\ro SL(2)\times \m_k\text{ for some }k.\]
\end{lemma}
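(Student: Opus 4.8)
The plan is to pin down each $Q_i$ as a subgroup of $GL(2)$ (note $\dim W_i=\dim V_i=2$, so $Q_i\subseteq GL(2)$) by first proving the infinitesimal statement $\mathfrak{sl}_2\subseteq \Lie Q_i$ for every $i$, and then classifying the algebraic subgroups of $GL(2)$ whose identity component contains $SL(2)$. Granting $\Lie Q_i\supseteq\mathfrak{sl}_2$, the group $Q_i^{0}$ contains $SL(2)$, so $Q_i=\det^{-1}(S)$ for the algebraic subgroup $S=\det(Q_i)\subseteq\GG_m$; since the only algebraic subgroups of $\GG_m$ are $\GG_m$ itself, the groups $\mu_k$, and the trivial one, this gives exactly $Q_i=GL(2)$ or $Q_i=\{g:(\det g)^k=1\}=SL(2)\cdot\mu_k$, which is the asserted dichotomy.

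To produce the copies of $\mathfrak{sl}_2$ I would analyze the common group $P=\im(P_\univ\to\Aut(E))$ from two sides, using that $F$ identifies $E=\otimes_iV_i$ with $\otimes_iW_i$ as $P_\univ$-representations. On the $V$-side, the image of $P_\univ$ in $\prod_i\Aut(V_i)$ lies in $SL(2)^{m+1}$ (because $P_i=SL(2)$) and surjects onto each factor by \cite[2.21]{Deli}; by Goursat's lemma its identity component is a product of diagonally embedded copies of $SL(2)$. Consequently $\mathfrak{p}:=\Lie P$ is semisimple, a sum of $\mathfrak{sl}_2$'s each acting diagonally on a block of the factors $V_i$. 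The key feature I would record here is that $E$, viewed as a $\mathfrak{p}$-module, has a constituent of multiplicity one, namely the Cartan (top weight) component spanned by the tensor of highest weight vectors of the $V_i$.

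On the $W$-side, $\mathfrak{p}$ is the image of $\Lie Q'$ under the derivation action $X\mapsto\sum_i 1\otimes\cdots\otimes X_i\otimes\cdots\otimes 1$ on $\otimes_iW_i$, where $Q'\subseteq\prod_iQ_i$ surjects onto each $Q_i$ (compare \ref{the inclusion}). Since a Lie subalgebra of $\mathfrak{gl}_2$ has derived algebra equal to $0$ or $\mathfrak{sl}_2$, setting $T=\{i:\mathfrak{sl}_2\subseteq\Lie Q_i\}$ we have $[\Lie Q_i,\Lie Q_i]=\mathfrak{sl}_2$ for $i\in T$ and $0$ otherwise. As $\mathfrak{p}$ is semisimple it equals its own derived algebra, hence it is the image of $[\Lie Q',\Lie Q']$, which projects into $\bigoplus_i[\Lie Q_i,\Lie Q_i]=\bigoplus_{i\in T}\mathfrak{sl}_2$; therefore $\mathfrak{p}\subseteq\bigoplus_{i\in T}\mathfrak{sl}(W_i)$, each summand acting only on the $i$-th tensor slot. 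If $T$ were not all of $\{1,\dots,m+1\}$, then $\mathfrak{p}$ would act through the identity on the factors outside $T$, so $E\cong(\otimes_{i\in T}W_i)^{\oplus 2^{|T^{c}|}}$ as a $\mathfrak{p}$-module and every constituent would occur with multiplicity divisible by $2^{|T^{c}|}\ge 2$. This contradicts the multiplicity-one constituent found above, so $T=\{1,\dots,m+1\}$ and $\mathfrak{sl}_2\subseteq\Lie Q_i$ for every $i$.

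I expect the genuine difficulty to be the comparison in the previous paragraph: it is a rigidity statement for the two tensor factorizations of the single object $E$ coming from the $V_i$ and from the $W_i$, and the point is that the semisimple part of $\mathfrak{p}$ must ``see'' each $W$-factor. My proposed resolution routes this entirely through the multiplicity-one constituent, which also automatically excludes the degenerate possibilities for $Q_i$ (a torus normalizer, a finite group, or a Borel or unipotent image): each of these has trivial $\mathfrak{sl}_2$-part and is thus ruled out once $T$ is shown to be everything. If one prefers a dimension count to the multiplicity argument for disposing of the small cases, the elementary fact that a centralizer in $GL(2)$ is at least two-dimensional (\ref{dim>2}) can be used to bound them instead.
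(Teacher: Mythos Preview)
Your approach is sound and genuinely different from the paper's, but there is one slip: the claim that a Lie subalgebra of $\mathfrak{gl}_2$ has derived subalgebra equal to $0$ or $\mathfrak{sl}_2$ is false --- a Borel (or its traceless version) has derived subalgebra the one-dimensional nilpotent line. The fix is immediate. For $i\notin T$ the algebra $\Lie Q_i$ does not contain $\mathfrak{sl}_2$ and is therefore solvable, so its derived series terminates; since $\mathfrak{p}=\mathfrak{p}^{(N)}$ for all $N$, replace $[\mathfrak{q}',\mathfrak{q}']$ by a sufficiently high term $(\mathfrak{q}')^{(N)}$, whose projection to slot $i$ then vanishes for every $i\notin T$. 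Equivalently, take a Levi subalgebra $\mathfrak{s}\subset\mathfrak{q}'$: it maps isomorphically onto $\mathfrak{p}$ (the kernel of $\mathfrak{q}'\to\mathfrak{p}$ consists of scalars), while its projection to any solvable $\Lie Q_i$ is a semisimple subalgebra of a solvable algebra, hence zero. Either way your multiplicity-one argument then goes through unchanged.

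By contrast, the paper works at the group level throughout. It first observes that $\ker(Q'\to P')$ is central, so $Q'$ is reductive, and then uses a dimension count against $\dim P'=3(m+1)$ to force every projection of $[Q',Q']$ to a $GL(2)$ factor to be exactly three-dimensional. This produces a lifting $SL(2)^{m+1}\hookrightarrow Q'$, and the heart of the paper's proof is a direct classification of homomorphisms $SL(2)^{m+1}\to GL(2)^{m+1}$ with finite kernel, carried out by hand via the commutation of images of distinct $SL(2)$ factors together with the centralizer bound of Lemma~\ref{dim>2}. Your route replaces that classification by the single representation-theoretic obstruction that the Cartan component of $E$ on the $V$-side has multiplicity one; this is cleaner and avoids the case analysis, at the mild cost of the Lie-algebra bookkeeping above. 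The paper's argument, on the other hand, produces slightly more along the way --- the explicit lift $SL(2)^{m+1}\to Q'$ and the induced permutation of factors --- which it immediately reuses in proving Lemma~\ref{decomposition of general F}; if you continue with your method you will need to recover that permutation separately.
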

Let $Q'$ be the image of $P_{\univ}\ra \prod Q_i$, and then the projections $Q'\ra Q_i$ are surjective for each $i$. 

\begin{proof}
Let $P'$ be $\im(Q' \ra \Aut(E)\cong GL(2^{m+1}))$. Then we have the following commutative diagram: 
\[\xymatrix{
Q_1\times Q_2\times Q_3 \cdots \times Q_{m+1} \ar@{^{(}->}[d] & Q' \ar@{^{(}->}[l] \ar[d] &SL(2)^{\times m+1} \ar[dl]^{\text{twisted by }F}\\
GL(2)^{\times m+1} \ar[r] & GL(2^{m+1}) &
}\]
Note $SL(2)^{\times m+1} \ra GL(2^{m+1})$ is twisted by $F$. The right triangle can be specified as 
\[\xymatrix{ 
Q' \ar@{->>}[d]& SL(2)^ {\times m+1} \ar@{->>}[dl]\\
P' \ar[d] &\\
GL(2^{m+1})
}\]
where $P'$ is the common image.

Since $SL(2)$ is semisimple, so is $P'/Z(P')$. Since $\ker(Q' \ra GL(2^{m+1}))\subset\ker (GL(2)^{\times m+1} \ra GL(2^{m+1}))$, the kernel of $Q' \ra P'$ consists of just central elements. The group $Q'$ is an extension of central elements and a semisimple group. Therefore $Q'$ is reductive and $P'$ is the adjoint group of $Q'$. Further, $Q' \ra P'$ induces a morphism from the derived group $[Q',Q']$ to $P'$ which further induces a surjection to $P'/Z(P')$. 
\[[Q',Q'] \ra Q' \ra P' \ra P'/Z(P').\]

If the projection of $[Q',Q']$ to some factor $GL(2)$ has dimension less than 3, then one of the projections must have dimension 4 because of $\dim P'=3\times{2^m}$. So one of the projections would be $GL(2)$. Since the kernel of $Q'\ra P'$ is finite,  $P'$, as the image of $SL(2)^{m+1} \ra GL(2^{m+1})$ would have infinitely many centers, contradiction.

Now we have the projections of $[Q',Q']$ to each factor have precisely dimension 3. Therefore each projection has the form $SL(2)\times \m_k$.  By comparing the dimensions, $SL(2)^{\times m+1} \subset \im(Q' \ra GL(2)^{\times m+1})$. Then we have a lifting
\[SL(2)^{\times m+1} \ra [Q',Q'] \subset Q'\]
such that the right triangle is commutative
\[\xymatrix{
Q_1\times Q_2\times Q_3 \cdots \times Q_{m+1} \ar@{^{(}->}[d] & Q' \ar@{^{(}->}[l] \ar[d] &SL(2)^{\times m+1} \ar[dl]^{\text{twisted by }F} \ar@{-->}[l]\\
GL(2)^{\times m+1} \ar[r] & GL(2^{m+1}) & .
}\]
Now we classify the elements with finite kernel in $\Hom(SL(2)^{\times m+1}, GL(2)^{\times m+1})$. 

First, recall that all automorphisms of $SL(2)$ are inner and hence $\Hom(SL(2), GL(2))$ consists of the trivial morphism and the conjugation by some element in $GL(2)$. For any morphism $f\in \Hom(SL(2)^{\times m+1}, GL(2)^{\times m+1})$, restricting to each factor of $SL(2)$ gives $m+1$ inclusions $SL(2) \hookrightarrow GL(2)$. Explicitly, \begin{align*}
(g_1,1,1, \cdots)&\mapsto (\p_{11}(g_1), \p_{12}(g_1), \p_{13}(g_1), \cdots)\\
(1,g_2,1, \cdots)&\mapsto (\p_{21}(g_2), \p_{22}(g_2), \p_{23}(g_2), \cdots)\\
(1,1,g_3,\cdots)&\mapsto (\p_{31}(g_3), \p_{32}(g_3), \p_{33}(g_3), \cdots).
\end{align*}
Then $\p_{11}(g_1)$ and $\p_{21}(g_2)$ commute for any $g_i\in SL(2)$. Note all the automorphisms of $SL(2)$ are inner. So if neither of $\p_{11}$ and $\p_{12}$ is an identity, then there exists $h,k\in GL(2)$ such that $\psi_{11}, \psi_{21}$ are conjugation by $h$ and $k$, respectively. Then for any $g_i\in SL(2)$,
\begin{align}
hg_1h^{-1}kg_2k^{-1} & =kg_2k^{-1}hg_1h^{-1}\\
k^{-1}hg_1h^{-1}kg_2k^{-1}h&=g_2k^{-1}hg_1,
\end{align}
Since $Z(k^{-1}g)$ in $GL(2)$ has dimension at least 2 by \ref{dim>2}, we can choose $g_2\in SL(2)$ such that $g_2\neq \pm I$ and $g_2\in Z(k^{-1}g)$. But then from (2), $g_2$ has to commute with $g_1$, i.e. $g_2\in Z(SL(2))=\pm 1$, contradiction. Therefore at least one of $\p_{11}$ and $\p_{21}$ is identity. Further, each column $\p_{*i}$ has at least $m$ identities. So each factor $SL(2)$ is embedded into exactly one of the $m+1$ copies of $GL(2)$ and trivially to the others. 

Then $\dim Q_i=3 \ro 4$. Since $Q_i \subset GL(2)$, $Q_i = GL(2) \ro SL(2)\times \m_k$ for some integer $k$.
\end{proof}

\begin{lemma} \label{decomposition of general F}
Assumptions as \ref{structure of Q_i}, there exist a permutation $s\in S_{m+1}$, dimension 1 representation $L_i$ with $\otimes_i L_i $ trivial and isomorphisms 
\[\phi_i: W_i  \ra V_{s(i)}\otimes L_i  \]  such that \[F=\otimes_i \phi_i.\]
\end{lemma}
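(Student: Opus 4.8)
The plan is to build on Lemma \ref{structure of Q_i}, whose proof already established the crucial combinatorial fact: any homomorphism $SL(2)^{\times m+1} \to GL(2)^{\times m+1}$ with finite kernel sends each factor $SL(2)$ isomorphically into exactly one target $GL(2)$ (via conjugation) and trivially to the others. In particular, the lifting $SL(2)^{\times m+1} \to [Q',Q'] \subset Q'$ there, composed with the projection to $\prod_i GL(2)$, realizes a permutation: each source factor $i$ maps nontrivially to a unique target slot, and since the map has finite kernel and all projections are surjective onto the $SL(2)$ inside each $GL(2)$, this assignment is a bijection. I would record this bijection as a permutation $s \in S_{m+1}$, set up so that the $i$-th source slot (corresponding to $W_i$) lands in the $s(i)$-th target slot (corresponding to $V_{s(i)}$).

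Next I would upgrade this statement about algebraic monodromy groups into an explicit tensor decomposition of the isomorphism $F$ itself. The key input is that, by Proposition \ref{the inclusion} and the structure theory above, the Tannakian-categorical interpretation lets us identify $W_i$ with a twist of $V_{s(i)}$. Concretely: the composite $F$ intertwines the $Q'$-action on $\otimes_i W_i$ with that on $\otimes_i V_i$, and the permutation $s$ tells us that as a $Q_i$-representation $W_i$ becomes isomorphic to $V_{s(i)}$ once we allow for a one-dimensional correction. I would make this precise by observing $\wedge^2 W_i$ and $\wedge^2 V_{s(i)}$ are both one-dimensional, defining $L_i := (\wedge^2 W_i) \otimes (\wedge^2 V_{s(i)})^{-1}$ (or an equivalent Hom-line), and producing $\phi_i \colon W_i \to V_{s(i)} \otimes L_i$ as the canonical isomorphism of $SL(2)$-representations that exists precisely because the source factor acts through $V_{s(i)}$. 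The condition that $\otimes_i L_i$ is trivial then follows by comparing top exterior powers: $\wedge^{2^{m+1}}(\otimes_i W_i) \cong \wedge^{2^{m+1}}(\otimes_i V_i)$ via $F$, and taking determinants of both sides forces $\otimes_i L_i$ to be the trivial character.

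The final step is to check that the externally constructed $\otimes_i \phi_i$ actually equals $F$, not merely that both are isomorphisms with the same equivariance. Here I would argue by Schur-type rigidity: since each $P_i = SL(2)$ acts irreducibly on the standard representation and the $W_i, V_i$ decompose the ambient spaces into a tensor product of irreducibles, the space of $Q'$-equivariant isomorphisms $\otimes_i W_i \to \otimes_i V_i$ is one-dimensional (a scalar multiple of any given one), by an application of Schur's lemma over $B(k)$ to the tensor product of distinct irreducible factors. Thus $F$ and $\otimes_i \phi_i$ differ by a scalar in $B(k)^\times$, which I can absorb into one of the $L_i$ without disturbing the triviality of the product.

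The main obstacle I anticipate is the rigidity argument of the last paragraph: I must be careful that Schur's lemma applies, which requires the factors to be \emph{genuinely} irreducible and pairwise distinct as representations of the relevant groups, and that I am working over a field ($B(k)$ has characteristic $0$, so this is fine) where endomorphism algebras of absolutely irreducible representations are just scalars. A subtler point is that the permutation $s$ must be shown to be \emph{well-defined and single-valued} — that no source factor maps nontrivially to two different targets — which is exactly the ``each column $\psi_{*i}$ has at least $m$ identities'' conclusion from the proof of \ref{structure of Q_i}; I would need to transport that statement from the level of the lifted homomorphism to the level of the actual twisting by $F$, taking care that the finite central kernels do not obstruct reading off $s$ unambiguously.
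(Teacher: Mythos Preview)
Your outline correctly extracts the permutation $s$ from the proof of Lemma~\ref{structure of Q_i}, and the Schur-lemma and determinant arguments at the end are sound once the $\phi_i$ exist as morphisms of $P_{\univ}$-representations (indeed the paper leaves $F=\otimes_i\phi_i$ and $\otimes_i L_i$ trivial implicit, so you are being more careful there). But there is a genuine gap in the middle: you have not established that $W_i \cong V_{s(i)} \otimes L_i$ \emph{as $P_{\univ}$-representations}. The permutation $s$ lives at the level of the lifted map $SL(2)^{\times m+1} \to Q'$, whereas $P_{\univ}$ acts through $Q'$, which in general is strictly larger than the image of that lift by central elements; knowing only that $P_{s(i)}$ embeds as $Q^0_i$ does not force the two projectivized $P_{\univ}$-actions on $W_i$ and on $V_{s(i)}$ to agree. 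Your proposed $L_i = (\wedge^2 W_i) \otimes (\wedge^2 V_{s(i)})^{-1}$ makes the problem concrete: since $P_{s(i)}=SL(2)$, the factor $\wedge^2 V_{s(i)}$ is trivial and you obtain $L_i = \wedge^2 W_i$, which is the \emph{square} of the needed twist. If $K_{12} \cong SL(2) \times \m_n$ with $W_i$ the standard representation tensored by a faithful character $\chi$ of $\m_n$, then your $L_i=\chi^2$, while the character making $W_i \cong V_{s(i)}\otimes L_i$ is $\chi$ itself; for $n$ even these differ, and no ``canonical isomorphism of $SL(2)$-representations'' will promote itself to a $P_{\univ}$-morphism.

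The paper closes this gap by computing the Tannakian group $K_{12}$ of the pair $\{W_i, V_{s(i)}\}$ directly. It first proves (Claim~1) that the two surjections $P_{\univ} \to PGL(2)$, one through $Q_i$ and one through $P_{s(i)}$, coincide---this is where the intertwining by $F$ inside $GL(2^{m+1})$ is actually used. It follows that $K_{12}$ embeds in the fiber product $Q_i \times_{PGL(2)} P_{s(i)}$, which is then computed (Claim~2) to be $SL(2) \times Z(Q_i)$. From this one reads off that $W_i$, as a $K_{12}$-representation and hence as a $P_{\univ}$-representation, is $V_{s(i)}$ tensored with a character $L_i$ of the central factor $Z(Q_i)$. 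This fiber-product step is exactly what your sketch is missing.
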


\subsection{Proof of \ref{decomposition of general F}}From the proof of \ref{structure of Q_i}, for each $i$, there exists a unique $P_j=SL(2)$ such that $P_j \hookrightarrow Q_i$. This inclusion is an isomorphism between $P_j$ and $Q^0_i$, which is a conjugation by some $l\in GL(2)$. Without loss of generality, assume $i=1$ and $j=2$. 

Note we have the following diagram: 
\[\xymatrix{
&P_{\univ}\ar@{->>}[dl] \ar@{->>}[dr]&\\
Q_1 \ar@{->>}[dr]^{f_1}&&P_2 \ar@{->>}[dl]_{f_2}\\
&PGL(2)&
}\] The morphism $f_1$ is the usual quotient by the center $Q_i \ra PGL(2)$. The morphism $f_2$ is twisted by the conjugation by $l$. 

\begin{Claim} this diagram is commutative.
\end{Claim}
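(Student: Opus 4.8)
The plan is to read off the commutativity from the $P_{\univ}$-equivariance of $F$ after projectivizing everything. For $g\in P_{\univ}$ write $w_i(g)\in Q_i$ and $v_i(g)\in P_i=SL(2)$ for its images on $W_i$ and $V_i$. Since $F\colon W_1\otimes\cdots\otimes W_{m+1}\ra V_1\otimes\cdots\otimes V_{m+1}$ is a morphism of representations, $(\otimes_i v_i(g))\circ F=F\circ(\otimes_i w_i(g))$ for all $g$, so $\rho_V=F\rho_W F^{-1}$ and $P=FP'F^{-1}$ as subgroups of $\Aut(E)=GL(2^{m+1})$. Reducing modulo scalars in $PGL(2^{m+1})$, the element $\bar F$ conjugates $\overline{P'}=\im(P_{\univ}\ra\prod_i PGL(W_i))$ onto $\overline{P}=\im(P_{\univ}\ra\prod_i PGL(V_i))$, and this conjugation $c=\mathrm{Inn}(\bar F)$ intertwines the two canonical projections from $P_{\univ}$, i.e. $c(\bar w_1(g),\ldots,\bar w_{m+1}(g))=(\bar v_1(g),\ldots,\bar v_{m+1}(g))$ for all $g$.

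Next I identify the two groups. By \ref{structure of Q_i} the derived group of $Q'$ is $SL(2)^{\times m+1}$, so modulo centers $\overline{P'}=PGL(2)^{\times m+1}$, its $i$-th simple factor being the projective action $Q_i/Z\cong PGL(2)$ on $W_i$; transporting through $c$, also $\overline{P}=PGL(2)^{\times m+1}$, with $i$-th factor $P_i/Z\cong PGL(2)$ acting on $V_i$ (the inclusion $\overline P\subset\prod_i PGL(V_i)$ is an equality by dimension). Thus $c$ is an isomorphism $PGL(2)^{\times m+1}\xrightarrow{\sim}PGL(2)^{\times m+1}$. Every such isomorphism permutes the simple factors, and since $\mathrm{Out}(PGL(2))$ is trivial it acts on each factor by an inner automorphism. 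Hence there is a permutation $s$ and elements $\bar l_i\in PGL(2)$ so that $c$ carries the $i$-th $W$-factor to the $s(i)$-th $V$-factor via $\mathrm{Inn}(\bar l_i)$. Combined with the intertwining relation this gives the factorwise identity $\bar v_{s(i)}(g)=\bar l_i\,\bar w_i(g)\,\bar l_i^{-1}$ for all $g$.

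With the labeling of the first paragraph ($i=1$, $s(1)=2$) this reads $\bar v_2(g)=\bar l_1\,\bar w_1(g)\,\bar l_1^{-1}$, i.e. $\overline{w_1(g)}=\overline{l\,v_2(g)\,l^{-1}}$ where $l$ is a lift of $\bar l_1^{-1}$. Since $f_1(w_1(g))=\overline{w_1(g)}$ and $f_2(v_2(g))=\overline{l\,v_2(g)\,l^{-1}}$, the two composites $P_{\univ}\ra PGL(2)$ agree, which is exactly the asserted commutativity.

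The delicate point, and the step I expect to be the main obstacle, is matching the inner twist $\bar l_1$ produced here with the element $l$ fixed in the first paragraph that realizes $P_2\cong Q_1^0$ inside $GL(2)$. This is not an extra coincidence: the inclusion $P_2\hookrightarrow Q_1$ is by construction the factor-$1$ component of the very conjugation $c$, so the two normalizations of $l$ coincide (the center and the choice $l\leftrightarrow l^{-1}$ are invisible once one has pinned $c$ down). I also use, via \ref{structure of Q_i}, that $\overline{P}$ and $\overline{P'}$ are the \emph{full} products $PGL(2)^{\times m+1}$ with their canonical factor decompositions, so that the ``permutation plus inner automorphisms'' description of $c$ applies; this is the only place the hypotheses $P_i=SL(2)$ and the structure of the $Q_i$ enter.
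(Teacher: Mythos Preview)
Your argument is correct and is, at bottom, the paper's own argument recast in projective language. The paper works directly with the lifting $d\colon SL(2)^{m+1}\to Q'$ built in the proof of \ref{structure of Q_i}: for $h\in P_{\univ}$, the two elements $d(h_1,\ldots,h_{m+1})$ and $(g_1,\ldots,g_{m+1})$ have the same image in $GL(2^{m+1})$, and since the kernel of $\prod GL(2)\to GL(2^{m+1})$ consists of scalars, $lh_2l^{-1}$ and $g_1$ agree in $PGL(2)$. You instead pass to $PGL(2^{m+1})$ from the start, identify $\overline{P'}$ and $\overline P$ with the full products $PGL(2)^{m+1}$, and read off the permutation-plus-inner form of $c=\mathrm{Inn}(\bar F)$ from the structure of isomorphisms of $PGL(2)^{m+1}$; this buys a cleaner conceptual picture at the cost of the extra step in your last paragraph, matching your $\bar l_1$ with the paper's $l$. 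That match is not automatic but does hold, because the very commutativity that pins down $d$ forces $\bar d=c^{-1}$ (both make the same triangle in $PGL(2^{m+1})$ commute and $\prod PGL(2)\hookrightarrow PGL(2^{m+1})$ is injective); you could make this identification more explicit. One small inaccuracy: the proof of \ref{structure of Q_i} does not show $[Q',Q']=SL(2)^{m+1}$ on the nose (the projections are only shown to be $SL(2)\times\mu_k$), but your conclusion $\overline{P'}=PGL(2)^{m+1}$ already follows from the existence of the lifting $SL(2)^{m+1}\to Q'$, so this does not affect the argument.
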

\begin{proof} We have
\[\xymatrix{
&P_{\univ} \ar@{->>}[d] \ar[dr]&\\
Q_1\times Q_2\times Q_3 \cdots \times Q_{m+1} \ar@{^{(}->}[d] & Q' \ar@{^{(}->}[l] \ar[d] &\prod P_i \ar[dl]^{\text{twisted by }F} \ar@{-->}[l]\\
GL(2)^{\times m+1} \ar[r] & GL(2^{m+1}) &
}\]

For any $h\in P_{\univ}$, let $(h_1, h_2,h_3, \cdots, h_{m+1})$ be the image of $h$ in $\prod P_i$ and $(g_1, g_2, g_3, \cdots, g_{m+1})$ image in $Q'$. Then $\prod P_i \dashrightarrow Q'$ permutes the factors and sends $(h_1, h_2,h_3, \cdots)$ to $(lh_2l^{-1},\cdots)$. Then $(lh_2l^{-1},\cdots)$ and $(g_1,g_2,g_3, \cdots)$ have the same image under $GL(2)^{\times m+1} \ra GL(2^{m+1})$. Therefore $C_l(h_2)=tg_1$ for some scalar $t\in B(k)$ where $C_l$ is the adjoint action by $l$. In particular, $f_2(h_2)=f_1(g_1)$.The claim is true.
\end{proof}
Then $P_\univ \ra Q_1 \times P_2$ factors through the limit of  \[\xymatrix{
Q_1 \ar@{->>}[dr]^{f_1}&&P_2 \ar@{->>}[dl]_{f_2}\\
&PGL(2)&.}\]

\begin{Claim} \label{the limit}
the limit of the above diagram is $P_2\times Z(Q_1)=SL(2)\times \m_n \text{ or } SL(2)\times \GG_m$ with \begin{align*}
P_2\times Z(Q_1) & \ra P_2& P_2 \times Z(Q_1) & \ra Q_1\\
(h, k)& \mapsto h& (h,k) & \mapsto (klhl^{-1}) .
\end{align*}
\end{Claim}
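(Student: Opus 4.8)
The limit in question is precisely the fibre product $Q_1 \times_{PGL(2)} P_2$ formed over the maps $f_1$ and $f_2$, so the plan is to write down an explicit isomorphism from $P_2 \times Z(Q_1)$ onto this fibre product and then check it is an isomorphism of group schemes. Recall that $P_2 = SL(2)$ and that, by the proof of \ref{structure of Q_i}, conjugation by $l$ furnishes an isomorphism $\iota \colon P_2 \ra Q_1^0 \subset Q_1$, $h \mapsto lhl^{-1}$, onto the identity component of $Q_1$; moreover $f_2$ was defined exactly so that $f_2(h)$ is the image of $lhl^{-1}$ in $PGL(2)$, that is, $f_2 = f_1 \circ \iota$.

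With this in hand I would define
\[ \Phi \colon P_2 \times Z(Q_1) \ra Q_1 \times_{PGL(2)} P_2, \qquad (h,k) \mapsto (k\, l h l^{-1},\, h). \]
First I would verify $\Phi$ is well defined: the first coordinate lies in $Q_1$ since $k \in Z(Q_1) \subset Q_1$ and $lhl^{-1} \in Q_1^0 \subset Q_1$; and because $k$ is a scalar it dies in $PGL(2)$, so $f_1(k\, lhl^{-1}) = f_1(lhl^{-1}) = f_2(h)$, which places the pair in the fibre product.

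For the inverse, given $(g,h)$ in the fibre product one has $f_1(g) = f_2(h) = f_1(lhl^{-1})$, so $g$ and $lhl^{-1}$ have the same image in $PGL(2)$ and therefore differ by a scalar $k := g\,(lhl^{-1})^{-1} \in \GG_m$. The decisive point is that $k$ actually lies in $Q_1$: both $g$ and $lhl^{-1}$ belong to $Q_1$, hence so does their ratio, giving $k \in Q_1 \cap \GG_m = Z(Q_1)$ — the last equality because $Q_1^0$ is a conjugate of $SL(2)$ and so acts irreducibly on the standard two dimensional space, forcing every central element of $Q_1$ to be scalar by Schur's lemma. Sending $(g,h) \mapsto (h,k)$ inverts $\Phi$, so $\Phi$ is an isomorphism. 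Finally I would read off $Z(Q_1)$ in the two cases of \ref{structure of Q_i}: $Z(Q_1) = \GG_m$ when $Q_1 = GL(2)$, and $Z(Q_1) = \m_n$ for a suitable $n$ when $Q_1 = SL(2) \times \m_k$, yielding $P_2 \times Z(Q_1) = SL(2) \times \GG_m$ or $SL(2) \times \m_n$ exactly as stated.

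The step I expect to be the main obstacle — indeed the only nonformal point — is verifying that the scalar $k$ relating $g$ and $lhl^{-1}$ genuinely belongs to $Q_1$, and hence to $Z(Q_1)$, rather than merely to $\GG_m$; this is what makes $\Phi$ surjective and pins the second factor down as $Z(Q_1)$ instead of all of $\GG_m$. It relies on the irreducibility of the $Q_1^0$-action, which identifies $Z(Q_1)$ with the scalar matrices contained in $Q_1$. Everything else is formal once the fibre-product description and the relation $f_2 = f_1 \circ \iota$ are established; since $\Phi$ and its inverse are given by morphisms of varieties, the resulting bijection is automatically an isomorphism of group schemes.
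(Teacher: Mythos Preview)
Your proof is correct and follows essentially the same approach as the paper: the paper verifies the universal property directly by sending any test object $K'$ (with maps $s_1,s_2$) to $Z(Q_1)\times SL(2)$ via $k\mapsto (s_1(k)\,C_l(s_2(k))^{-1},\,s_2(k))$, which is exactly your inverse map $(g,h)\mapsto (h,\,g(lhl^{-1})^{-1})$ specialized to the universal $K'=Q_1\times_{PGL(2)}P_2$. Your added justification that $k\in Q_1\cap\GG_m=Z(Q_1)$ via irreducibility of $Q_1^0$ and Schur's lemma is a welcome clarification of the paper's terse ``the map is well defined''.
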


\begin{proof} We can prove it directly:  for any $K'$ fitting in the diagram \[\xymatrix{
&K' \ar@{->>}[dl]^{s_1} \ar@{->>}[dr]_{s_2}&\\
Q_1 \ar@{->>}[dr]^{f_1}&&P_2 \ar@{->>}[dl]_{f_2} \ar@{-->}[ll]^{ C_l}\\
&PGL(2)&,
}\] we construct the map 
\begin{align*}
K'&\ra Z(Q_1) \times SL(2) \\
k&\mapsto (s_1(k)C_l(s_2(k))^{-1}, s_2(k)).
\end{align*}
Since the lower triangle is commutative, the map is well defined and obviously it is unique.  \end{proof}
Consider the Tannakian category generated by $\{W_1, V_2\}$. Then it is isomorphic to $\Rep(K_{12})$ for some algebraic group $K_{12}$.  By \ref{the inclusion}, 
\[K_{12}=\im(P_{\univ}\ra \Aut(W_1)\times \Aut(V_2))\subset Q_1\times P_2.\]
Therefore by Claim \ref{the limit}, $K_{12}\subset P_2\times Z(Q_1)=SL(2) \times Z(Q_1)$. 

If $Q_1=GL(2)$, then $\dim K_{12}=4$ and by $GL(2)$ connected, $K_{12}=SL(2)\times \GG_m$. 

If $Q^0_1=SL(2)$ and $Z(Q_1)=\m_n$, then $\dim K_{12}^0=3$ and hence $K_{12}^0=SL(2)$. It suffices to determine the number of the connected components of $K_{12}$. Let $\z$ be a generator of $\m_n$. Then $\z$ and $-\z$ are in the same component of $Q_1$. 
\begin{enumerate}
\item If $n\equiv 0 \pmod 4$,  then $-\z$ is also a generator of $\m_n$. Therefore $K_{12}$ has to be $SL(2)\times \m_n$ to cover the whole $Q_1$.
\item If $n\equiv 2 \pmod 4$, then $\m_n=\pm I \times \m_{\frac{n}{2}}$ and hence $Q_1\cong SL(2)\times \m_\frac{n}{2}$. So besides $SL(2)\times \m_n$,  $K_{12}$ also can be $Q_1$.
\end{enumerate}

In summary, $K_{12}= SL(2)\times \GG_m \text{ or }SL(2)\times \m_k$ for \text{some} $k$. 

Therefore as an irreducible $K-$representation,  $W_i$ is tensor of a $SL(2)-$representation and an irreducible $\m_k$ or $\GG_m$ representation, i.e. $W_i=V_{\s(i)}\otimes L_i$. 

This is the end of the proof of \ref{decomposition of general F}.

\section{Tensor decomposition the Frobenius} \label{tensor decomposition of E}
 \label{tensor decomposition of F}

Now we come back to the context of \ref{main thm 1}. The Dieudonne crystal $\cE=R^1\pi_{\cri\, *}(\cO_X)$ admits the Frobenius map: \[\cE^{\s}\stackrel{F}{\ra} \cE.\] 
 Then we have 
\begin{equation} \label{the frobenius} F: \cV^\s_1 \otimes \cV^\s_2 \otimes \cV^\s_3\cdots \otimes \cV^\s_{m+1}\otimes B(k) \stackrel{\cong}{\ra} \cV_1 \otimes \cV_2 \otimes \cV_3 \cdots \otimes \cV_{m+1} \otimes B(k). \end{equation}
where $B(k)$ is the fractional field of $W(k)$. By \ref{T and iso}, the category of isocrystals over $C$ is Tannakian. 

\begin{prop} \label{for P_i}
For each $i$, $P_i\cong SL(2, B(k))$ and $P_\univ\ra \prod P_i$ is surjective.
\end{prop}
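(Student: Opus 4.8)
The plan is to transport the complex monodromy computation of Proposition \ref{over C} to the isocrystal setting via the crystalline--de Rham comparison, and then to bootstrap from the individual factors to the full product by a finite-kernel argument.

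First I would establish the easy inclusion $P_i \subseteq SL(V_i)$. Over $\C$ the representation $\wedge^2 \C^2$ is trivial for $\Gamma$, so the determinant $\wedge^2 \cV_i$ is the trivial local system; under the Lefschetz descent of \ref{Lefschetz principle} and Berthelot's equivalence \cite[Theorem 6.6]{Berthelot} this says $\wedge^2 \cV_i \cong \mathbf{1}$ as isocrystals. Hence the determinant character of $V_i$ is trivial and $P_i = \im(P_\univ \to \Aut(V_i))$ lands in $SL(V_i) \cong SL(2,B(k))$.

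The core step is to identify the base change $P_i \otimes_{B(k)} \C$ with the complex algebraic monodromy group $K_i$ of \ref{over C}. By \cite[Theorem 6.6]{Berthelot} the category of isocrystals on $C/W(k)$ is equivalent, as a Tannakian category, to the category of MIC on $\tilde C$ after inverting $p$; under this equivalence $V_i$ corresponds to the flat bundle $\cV_i$ and the generated subcategory $\langle V_i \rangle$ corresponds to the MIC-subcategory it generates, so $P_i$ is the Tannakian group of $\cV_i$ as a MIC over $\tilde C_{B(k)}$. Since $\cV_i$ (isocrystal) and the complex flat bundle $\cV_i$ arise by base change from a common model over $R$, I would extend scalars along an embedding $B(k) \hookrightarrow \C$ compatible with this $R$-model; because in characteristic $0$ the formation of the Tannakian image $\im(P_\univ \to \Aut(V_i))$ commutes with the flat base field extension, this yields $P_i \otimes_{B(k)} \C = \im(G_\univ \to \Aut(V_i)_\C)$, which by Example \ref{MIC} and Riemann--Hilbert is exactly the complex algebraic monodromy group $K_i$. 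Proposition \ref{over C} gives $K_i = SL(2,\C)$. Then $P_i \subseteq SL(V_i)$ with $P_i \otimes_{B(k)} \C = SL(V_i)\otimes_{B(k)}\C$, and faithfully flat descent of closed subschemes forces $P_i = SL(V_i) \cong SL(2,B(k))$.

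For the surjectivity of $P_\univ \to \prod_i P_i$, write $P''$ for its image, a closed subgroup of $\prod_i P_i \cong SL(2)^{\times m+1}$. By \ref{the inclusion} together with \cite[2.21]{Deli}, $P''$ surjects onto each factor. Over $\C$, Proposition \ref{over C} states that the connected monodromy of $V_\C = \C^2 \otimes \cdots \otimes \C^2$ is the full image of $SL(2,\C)^{\times m+1}$ in $\Aut(V_\C)$; thus the composite $P'' \otimes \C \hookrightarrow SL(2,\C)^{\times m+1} \twoheadrightarrow \Aut(V_\C)$ attains this full image. Since $SL(2,\C)^{\times m+1} \to \Aut(V_\C)$ has finite kernel (the sign relations $\epsilon_i = \pm 1$ with $\prod_i \epsilon_i = 1$) and $SL(2,\C)^{\times m+1}$ is connected, $P'' \otimes \C$ has finite index in this connected group and therefore equals it. Descending through the same base-change equivalence used above gives $P'' = \prod_i P_i$, i.e. the desired surjectivity. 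The main obstacle is precisely this transfer across characteristics: verifying that Berthelot's equivalence matches the generated subcategory $\langle \cV_i \rangle$ on both sides and that the extension $B(k)\hookrightarrow \C$ (through the common $R$-model) is compatible with forming the Tannakian image. The determinant bound, the split-form remark, and the finite-kernel/connectedness argument for the product are then formal.
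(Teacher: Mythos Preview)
Your proposal is correct and follows essentially the same route as the paper: both transport Proposition \ref{over C} to the isocrystal side via Berthelot's equivalence and an embedding $B(k)\hookrightarrow\C$, then use the finite-kernel/connectedness argument on $SL(2)^{\times m+1}\to\Aut(E)$ for the surjectivity. The only cosmetic difference is that the paper pins down $P_i$ by observing it is a $B(k)$-form of $SL(2)$ with a faithful $2$-dimensional representation (hence split), whereas you argue via the inclusion $P_i\subset SL(V_i)$ becoming an equality after $\otimes\,\C$ and faithfully flat descent; these are equivalent.
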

\begin{proof}
Since by \cite[Theorem 6.6]{Berthelot} the crystals on $C/W(k)_\cri$ are exactly vector bundles with a connection over $\tilde{C}$, $\Rep_\C(P_{\univ}\otimes \C)$ is a Tannakian subcategory of $\Rep_\C(G_{\univ})$. By functorality, $G_{\univ}\ra \Aut(E\otimes \C)$ factors through $P_{\univ}\otimes \C$. By \ref{over C} and \ref{MIC} , 
\begin{align*} P\otimes \C =& \im(G_{\univ}\ra \Aut(V)) = \im(SL(2,\C)^{\times m+1}\ra \Aut({\C^2}^{\otimes m+1}))\\ P_i \otimes \C =& \im(G_{\univ}\ra \Aut(\C^2)) = SL(2,\C). \end{align*} 

The group $P_i$ is a $B(k)-$form of $SL(2)$ and admits a faithful two dimensional representation. Therefore $P_i\cong SL(2,B(k))$. 

Therefore  $P=\im(P_{\univ} \ra \prod_i P_i \ra \Aut(E))$ is the same as $\prod_i P_i \ra \Aut(E)$, after tensoring with $\C$. Since it is faithfully flat, it is also true over $B(k)$ and $P=\im(\prod_i P_i \stackrel{\otimes}{\ra} \Aut(E))$. Further, since the kernel of $(\prod P_i\ra \Aut(E))$ is finite,  $\im(P_{\univ} \ra \prod_i P_i)$ is an algebraic subgroup of $\prod_i P_i$ with the same dimension. Since $\prod_i P_i=SL(2,B(k))^{\times m+1}$ are connected, $$\im(P_{\univ} \ra \prod_i P_i)=\prod_i P_i,$$ i.e. $P_{\univ} \ra \prod_i P_i$ is surjective. 
\end{proof}


Now we can interpret isomorphism (\ref{the frobenius}) as follows. We already have a rank $2^{m+1}$ isocrystal admitting a tensor decomposition to $m+1$ rank 2 isocrystals, each corresponding to a standard representation of $SL(2)$. Then for another tensor decomposition to $m+1$ rank 2 isocrystals, just as left hand side of (\ref{the frobenius}), we expect that each component also corresponds to a $SL(2)-$representation which is a corollary of \ref{structure of Q_i}.

\begin{prop} \label{the structure of G}
For each $i$, $Q_i \cong SL(2,B(k))$.
\end{prop}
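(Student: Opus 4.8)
The plan is to invoke Lemma \ref{structure of Q_i}, which already constrains $Q_i$ to two possibilities, and then to eliminate all but the desired one by computing the determinant of $W_i$. Since \ref{for P_i} gives $P_i = SL(2,B(k))$, the hypotheses of \ref{structure of Q_i} are satisfied, and that lemma shows that geometrically $Q_i$ is either $GL(2)$ or $SL(2)\times \m_k$ for some $k$, the latter being realized inside $GL(2)$ as the product of $SL(2)$ with the group of scalar $k$-th roots of unity $\m_k$. In either case the identity component contains $SL(2)$, so it suffices to establish the single inclusion $Q_i \subseteq SL(2)$: this excludes $GL(2)$, and it forces the scalar part $\m_k$ to lie in $\{\pm I\}\subset SL(2)$, leaving $Q_i = SL(2)$.

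To prove $Q_i \subseteq SL(2)$ I would show that the rank one isocrystal $\wedge^2 W_i$ is trivial, i.e.\ that $P_\univ$ acts trivially on $\det W_i$. The key observation is that $W_i = \cV_i^\s$ is the pullback of $\cV_i$ along the absolute Frobenius $\s\colon C\to C$, and that $\s^*$ is a monoidal endofunctor of $\iso(C/W(k))$ (recall this is the category of plain isocrystals, with no Frobenius structure), so it commutes with $\wedge^2$ and preserves the unit object:
\[\wedge^2 W_i = \wedge^2(\s^*\cV_i)\cong \s^*(\wedge^2 \cV_i).\]
Now $\wedge^2 \cV_i$ is itself the unit object: because $P_i=\im(P_\univ\to \Aut(V_i))=SL(2,B(k))$ lands in $SL(2)$, the action of $P_\univ$ on $\det V_i$ is trivial, whence $\wedge^2 \cV_i\cong \cO$. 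Applying $\s^*$ gives $\wedge^2 W_i\cong \s^*\cO\cong \cO$, so $Q_i \subseteq SL(2)$. Together with the previous paragraph this identifies $Q_i$ geometrically with $SL(2)$; in particular $Q_i$ is connected.

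It remains to pin down the $B(k)$-form, and here I would repeat the argument used at the end of \ref{for P_i}. The group $Q_i$ is a connected $B(k)$-form of $SL(2)$ carrying the faithful two-dimensional representation $W_i$. Over a field the only forms of $SL(2)$ are the split group and the groups $SL_1(D)$ of norm-one elements of a quaternion division algebra $D$, and the latter admit no faithful two-dimensional representation; hence $Q_i$ is split, that is $Q_i\cong SL(2,B(k))$.

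The step I expect to require the most care is the triviality of $\wedge^2 W_i$. One must verify that the absolute-Frobenius pullback on isocrystals is genuinely monoidal, carrying the unit object to the unit object and commuting with exterior powers, and one must be sure that the triviality of $\wedge^2 \cV_i$—which originates from the complex monodromy computation \ref{over C}, where $\wedge^2\C^2$ has trivial monodromy—is correctly transported to the crystalline setting over $B(k)$. Once $\wedge^2 W_i\cong\cO$ is in hand, the remaining steps (discarding $GL(2)$ and the extra scalars, and recognizing the split form through the faithful representation) are purely formal.
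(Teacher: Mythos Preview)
Your proposal is correct and follows essentially the same route as the paper: invoke Lemma~\ref{structure of Q_i} via Proposition~\ref{for P_i}, then use the triviality of $\wedge^2\cV_i$ (inherited from the trivial determinant of the complex local system $\C^2$) to force $\det Q_i=1$, hence $Q_i=SL(2)$. The only differences are expository: you spell out the monoidality of $\s^*$ to pass from $\wedge^2\cV_i\cong\cO$ to $\wedge^2 W_i\cong\cO$, and you make explicit the split-form argument for $Q_i$ over $B(k)$, which the paper leaves implicit (having already given it for $P_i$ in Proposition~\ref{for P_i}).
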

\begin{proof}

By \ref{for P_i}, $\cV_i$, $\cV^\s_i$ and the isomorphism (\ref{the frobenius}) satisfy the conditions of \ref{structure of Q_i}. Therefore the Tannakian group $Q_i$ corresponds to $\cV^\s_i$ is either $GL(2) \ro SL(2)\times \m_k$.

Furthermore, note $\cV_i$ comes from $\C^2$ in (\ref{the tensor decomposition of V}). Since the local system $\C^2$ on $M$ has a trivial determinant, each isocrystal $\cV_i$ has $\wedge^2 \cV_i=\cO_{\tilde C}$. So correspondingly $\det Q_i=1$ and thus $Q_i = SL(2)$. 
\end{proof}

Apply \ref{decomposition of general F} and note that $W_1$ and $V_2$ are the corresponding objects of $\cV^\s_1\otimes B(k)$ and $\cV_2 \otimes B(k)$ in $\Rep(P_{\univ})$, respectively. We have that there exist a permutation $s\in S_{m+1}$, rank 1 crystals $\cL_i$ with $\otimes_i \cL_i \cong \cO_{\tilde C}$ and isomorphisms 
\[\phi_i: \cV^\s_i \otimes B(k) \ra \cV_{s(i)}\otimes \cL_i \otimes B(k)\]  
such that 
\[F=\otimes_i \phi_i.\] 
In fact, we can refine $\phi_i$ to be a morphism between crystals. 

\begin{prop} \label{W,V} \label{the structure of F}\label{descent to W(k)}
There exist a permutation $s\in S_{m+1}$, rank 1 crystals $\cL_i$ with $\otimes_i \cL_i \cong \cO_{\tilde C}$ and isomorphisms 
\[\phi_i: \cV^\s_i \ra \cV_{s(i)}\otimes \cL_i \]  such that \[F=\otimes_i \phi_i.\]
\end{prop}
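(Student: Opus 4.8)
The plan is to keep the permutation $s$, the line crystals $\cL_i$, and the isocrystal isomorphisms $\phi_i\colon \cV^\s_i\otimes B(k)\ra \cV_{s(i)}\otimes\cL_i\otimes B(k)$ already produced, and merely to correct each $\phi_i$ by a power of $p$ so that it becomes a morphism of honest crystals while preserving $F=\otimes_i\phi_i$. The only correction I allow is $\phi_i\mapsto\lambda_i\phi_i$ with $\lambda_i\in B(k)^\times$, and since $\otimes_i(\lambda_i\phi_i)=(\prod_i\lambda_i)\,F$, I must keep $\prod_i\lambda_i=1$; the claim will be that a choice with $\lambda_i\in p^{\Z}$ does the job.

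First I would attach to each $\phi_i$ a $p$-adic valuation. By \ref{T and iso} there is a smallest integer $n_i$ with $\psi_i:=p^{n_i}\phi_i\colon\cV^\s_i\ra\cV_{s(i)}\otimes\cL_i$ a morphism in $\cris(C/W(k))$; by minimality $\psi_i$ is primitive, meaning not divisible by $p$ among crystal morphisms. Through the equivalence of \cite[Theorem 6.6]{Berthelot} between crystals on $C$ and modules with integrable connection on $\tilde C$, a crystal morphism is divisible by $p$ exactly when its reduction to a bundle map on the special curve $C$ vanishes; so $\psi_i$ primitive means $\psi_i\bmod p\neq0$. Writing $n(-)$ for this valuation on isocrystal isomorphisms between effective objects, the goal becomes $\sum_i n_i=n(F)=0$.

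The key step is tensor-additivity of $n$, i.e. that $p^{\sum_i n_i}F=\otimes_i\psi_i$ is again primitive; concretely, a tensor product of primitive crystal morphisms is primitive. Reducing mod $p$ gives the tensor of the nonzero bundle maps $\psi_i\bmod p$ on $C$, and since $C$ is an integral (smooth, connected) curve it suffices to test non-vanishing at the generic point $\eta$, where the tensor of finitely many nonzero $\kappa(\eta)$-linear maps is nonzero. Hence $n(\otimes_i\phi_i)=\sum_i n_i$. I expect this to be the only genuine point of the argument; the rest is bookkeeping.

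Finally I would check that $F$ itself is a primitive crystal morphism, forcing $\sum_i n_i=0$. Here $F\colon\cE^\s\ra\cE$ is a morphism of crystals with $v_p(\det F)$ equal to the sum of the Hodge slopes of $H^1$ of a $2^m$-dimensional abelian variety, namely $\dim X=2^m$, which is strictly less than $\mathrm{rank}\,\cE=2^{m+1}$; were $p\mid F$ we would get $v_p(\det F)\ge 2^{m+1}$, a contradiction (equivalently, generic ordinarity of $X/C$ makes $F\bmod p$ nonzero at $\eta$). Therefore $\sum_i n_i=n(F)=0$, and setting $\phi_i:=\psi_i=p^{n_i}\phi_i$ gives crystal morphisms, still isomorphisms after inverting $p$, with $\otimes_i\phi_i=p^{\sum_i n_i}F=F$, which is exactly the assertion.
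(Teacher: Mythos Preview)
Your argument is correct and follows essentially the same route as the paper's proof: both clear denominators in the $\phi_i$ to make them primitive crystal morphisms (nonzero modulo $p$ at the generic point) and then exploit that a tensor of such maps is again nonzero at $\eta$. The only cosmetic difference is in bookkeeping: the paper normalizes $\phi_1,\dots,\phi_m$ to be primitive and argues directly, by testing on elements $a_i$ with $p^{k_i}\phi_i(a_i)\not\equiv 0\pmod p$, that the forced remaining factor $p^{-k_1-\cdots-k_m}\phi_{m+1}$ lands in the integral lattice because $F$ does; you instead package the same generic-point computation as tensor-additivity of a $p$-adic valuation $n(-)$ and then invoke primitivity of $F$ to force $\sum_i n_i=0$. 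Your extra input, that $F$ itself is primitive (via $v_p(\det F)=2^m<2^{m+1}$, or equivalently via the conjugate filtration showing $\operatorname{im}(F\bmod p)=\a_C^{(p)}\neq 0$), is a clean way to close the loop and does not require the generic-ordinariness hypothesis, so your proof applies at exactly the same generality as the paper's.
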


\begin{proof}
Since $\cE$ is an $F-$crystal, we still have $F: \otimes \cV^\s_i \ra \otimes \cV_i$. Since each $\phi_i$ is a morphism between effective isocrystals, by \ref{T and iso},  there exists an integer $k_i$ such that $p^{k_i} \phi_i $ is a morphism in $\cris(C)$. We can assume $p^{k_i} \phi_i \neq 0 \pmod p$ at the generic point. Then $p^{-k_1-k_2-\cdots -k_m}\phi_{m+1}$ is also a morphism in $\cris(C)$. In fact, for any $U \subset C$ and $a_{m+1} \in {\cV^\s_{m+1}}(U)$, we can find $a_1 \in {\cV^\s_1}(U), a_2 \in {\cV^\s_2}(U), \cdots$ such that 
\[p^{k_i}\phi_i(a_i) \neq 0 \pmod p\] 
for $1\leq i\leq m$. Then $ p^{-k_1-k_2-\cdots -k_m}\phi_{m+1}(a_{m+1}) \in \cV_{m+1}(U)$. Otherwise, 
\[F(a_1 \otimes a_2\cdots \otimes a_{n+1})=p^{k_1}\phi_1(a_1) \otimes_{B(k)} p^{k_2} \phi_2(a_2) \cdots \otimes_{B(k)} p^{-k_1-k_2-\cdots -k_m}\phi_{m+1}(a_{m+1}) \]
 is not in $\cV_1 \otimes \cV_2 \cdots \otimes \cV_{m+1}(U) $. 

\end{proof}

A straightforward corollary of \ref{descent to W(k)} is that 
\begin{cor}
Viewed as a morphism between crystals, $F$ still preserves pure tensors. 
\end{cor}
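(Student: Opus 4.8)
The plan is to read the conclusion off directly from the tensor decomposition $F=\otimes_i \phi_i$ established in \ref{descent to W(k)}; the only real content is careful bookkeeping of the rank $1$ crystals $\cL_i$ and the permutation $s$.

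First I would fix a local pure tensor $a_1\otimes\cdots\otimes a_{m+1}$, where each $a_i$ is a local section of $\cV^\s_i$ over some open $U\subset C$. Applying $F=\otimes_i\phi_i$, and using that a tensor product of morphisms acts factorwise on decomposable elements, gives
\[F(a_1\otimes\cdots\otimes a_{m+1})=\phi_1(a_1)\otimes\cdots\otimes\phi_{m+1}(a_{m+1}),\]
where each $\phi_i(a_i)$ is a section of $\cV_{s(i)}\otimes\cL_i$.

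Next I would trivialize the line bundles locally: shrinking $U$ if necessary, choose a generator $\ell_i$ of $\cL_i$ and write $\phi_i(a_i)=b_i\otimes\ell_i$ with $b_i$ a local section of $\cV_{s(i)}$. Reshuffling the tensor factors, the image becomes
\[\left(b_1\otimes\cdots\otimes b_{m+1}\right)\otimes\left(\ell_1\otimes\cdots\otimes\ell_{m+1}\right),\]
in which the first factor, after reindexing by $j=s(i)$, is a genuine pure tensor in $\cV_1\otimes\cdots\otimes\cV_{m+1}$, while the second is a section of $\otimes_i\cL_i\cong\cO_{\tilde C}$, hence a scalar function that can be absorbed into one of the $\cV$-factors. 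This exhibits $F(a_1\otimes\cdots\otimes a_{m+1})$ as a pure tensor in $\cE=\cV_1\otimes\cdots\otimes\cV_{m+1}$.

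I do not expect a genuine obstacle here: all the work was done in \ref{descent to W(k)}, where the decomposition was upgraded from an identity of isocrystals to an honest identity $F=\otimes_i\phi_i$ of crystal morphisms. The only point needing care is that neither the permutation $s$ nor the auxiliary line bundles spoil decomposability, and this is precisely where the hypothesis $\otimes_i\cL_i\cong\cO_{\tilde C}$ enters: it guarantees that the combined line-bundle contribution is a unit scalar rather than an obstruction to the image being a pure tensor.
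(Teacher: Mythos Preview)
Your proposal is correct and matches the paper's own treatment: the paper states this as ``a straightforward corollary of \ref{descent to W(k)}'' with no further argument, and what you have written is exactly the intended unwinding of $F=\otimes_i\phi_i$ together with $\otimes_i\cL_i\cong\cO_{\tilde C}$.
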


%

%

Let $\eta$ be the generic point of $C$ and $\cV_{i, \eta}$ denote the restriction of $\cV_i$ to the crystalline site $\cri(\eta/W(k))$.

Since $C$ parametrizes a family of polarized abelian varieties (with a level structure), it admits a map to the moduli scheme $\cA_{2^m, d, n}\otimes k$. If the image intersects with the ordinary locus in $\cA_{2^m, d, n}\otimes k$, we say ``$C$ intersects the ordinary locus" for simplicity. Note since the ordinary locus is open in $\cA_{2^m, d, n}\otimes k$, the statement is equivalent to the universal family over $C$ is generically ordinary.
Let 
\[0 \ra \o\ra \cE \ra \a \ra 0\] 
be the weight 1 Hodge filtration associated to $\tilde X/\tilde C$. Then from the definition of Mumford curves, especially the action of Hodge group $Q$ on $V$, we know $\o$ is constructed from a line bundle $\scr{L}$ in $\cV_i$ for some $i$, say $i=1$, then 
\begin{equation}\label{the line bundle L} \o\cong \sL \otimes \cV_2 \otimes \cV_3 \cdots \otimes \cV_{m+1} .\end{equation} 
Correspondingly $\a\cong \cV_1/\scr{L} \otimes \cV_2 \otimes \cV_3\cdots \otimes \cV_{m+1}$ and the Hodge filtration of $\cE$ comes from a filtration  $\sL \subset \cV_1$: 
\[\sL \otimes \cV_2 \otimes \cV_3 \cdots \otimes \cV_{m+1}\subset \cE=\cV_1\otimes \cV_2 \otimes \cV_3\cdots \otimes \cV_{m+1} .\]Base change from $W(k)$ to $k$. Denote the reduction of $\tilde{C}$ over $k$ as $C$ and the reduction of $\cE$ as  ${\cE_C}$. Then the Frobenius ${\cE_C}^{(p)} \xrightarrow{ F} {\cE_C}$ factors through ${\a_C}^{(p)}$ and then we have the conjugate spectral sequence:
\begin{equation} \label{conjugate filtration}
0\ra {\a_C}^{(p)} \ra  {\cE_C} \ra  {\o_C}^{(p)}\ra 0.
\end{equation} 



 
\begin{prop} \label{s(j)=j}
If $C$ intersects the ordinary locus, then $s(1)=1$.
\end{prop}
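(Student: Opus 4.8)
The plan is to work over the special fibre $C$, at its generic point $\eta$, and to play the tensor factorization of Frobenius against the slope structure at an ordinary point. First I would reduce the decomposition $F=\otimes_i\phi_i$ of \ref{descent to W(k)} modulo $p$, keeping the names $F$ and $\phi_i$ for the induced $\cO_C$-linear maps $F\colon\cE_C^{(p)}\ra\cE_C$ and $\phi_i\colon\cV_{i,C}^{(p)}\ra\cV_{s(i),C}\otimes\cL_{i,C}$, so that $F=\otimes_i\phi_i$. Generic ordinarity says precisely that $F$ has rank $2^m$ at $\eta$. Since the rank of a tensor product of linear maps is the product of the ranks, and $F$ is a product of $m+1$ factors $\phi_i$ of generic rank in $\{0,1,2\}$, the requirement that the product of the generic ranks equal $2^m=2^{m+1}/2$ forces exactly one $\phi_i$ to have generic rank $1$ and the remaining $m$ to be isomorphisms at $\eta$.

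To identify this distinguished factor I would use the Hodge filtration. The factoring of $F$ through $\alpha_C^{(p)}$ recorded in \eqref{conjugate filtration} gives $\omega_C^{(p)}\subseteq\ker F$, and since $\ker F$ now has rank $2^m$, equal to the rank of $\omega_C^{(p)}$, we get $\ker F=\omega_C^{(p)}$ at $\eta$. On one hand, if $\phi_{i_0}$ is the unique non-isomorphism, then $\ker F=\cV_{1,C}^{(p)}\otimes\cdots\otimes\ker\phi_{i_0}\otimes\cdots\otimes\cV_{m+1,C}^{(p)}$, a line in the $i_0$-th tensor slot and full in the others. On the other hand \eqref{the line bundle L} gives $\omega_C^{(p)}=\sL_C^{(p)}\otimes\cV_{2,C}^{(p)}\otimes\cdots\otimes\cV_{m+1,C}^{(p)}$, a line in the first slot and full in the others. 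These two subbundles are distinguished in the $i_0$-th and in the first tensor slot respectively, so $i_0=1$. Thus $\phi_1$ is the distinguished rank-$1$ factor, with $\ker\phi_1=\sL_C^{(p)}$.

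Finally I would read off $s(1)$ from the image. Because the image of a tensor product of maps is the tensor product of the images, and $\phi_1$ has rank $1$ while the other $\phi_i$ are isomorphisms, $\im F=\otimes_i\im\phi_i$ is, after absorbing the line bundles $\cL_i$ (whose tensor product is trivial), a line in the $s(1)$-th tensor slot of $\cE_C$ and full in the others. But $\im F=\alpha_C^{(p)}$ is the conjugate subbundle, and for a generically ordinary family the Hodge subbundle $\omega_C$ and the conjugate subbundle $\alpha_C^{(p)}$ are transverse, i.e. $\cE_C=\omega_C\oplus\alpha_C^{(p)}$ at $\eta$. Now $\omega_C$ is a line in slot $1$ and $\alpha_C^{(p)}$ is a line in slot $s(1)$; if $s(1)\neq 1$ their intersection would be a line in slot $1$, a line in slot $s(1)$, and full in every other slot, hence of dimension $2^{m-1}>0$, contradicting transversality. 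Therefore $s(1)=1$.

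The step I expect to require the most care is the transversality invoked above. It is the reduction modulo $p$ of the slope decomposition $\cE_\eta=\cE_0\oplus\cE_1$ at an ordinary point: $\omega_C$ is the reduction of the slope-$1$ (multiplicative) part and $\alpha_C^{(p)}=\im F$ the reduction of the slope-$0$ (unit-root) part, and these are complementary because $F$ is bijective on the unit-root part and vanishes on the slope-$1$ part. Once this is in hand, the rest — the multiplicativity of ranks, the behavior of kernels and images under tensor products, and the bookkeeping of which tensor slot each line occupies — is routine.
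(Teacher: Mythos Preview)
Your argument is correct and essentially the same as the paper's: both reduce modulo $p$, deduce from $\ker F\supseteq\omega^{(p)}$ (a line in the first tensor slot) that $\phi_1$ has rank $1$ and hence $\im F$ is a line in slot $s(1)$, then use ordinarity to rule out $s(1)\neq1$---you via transversality of $\omega_C$ and $\alpha_C^{(p)}=\im F$, the paper via surjectivity of the composite $\cE_C^{(p)}\xrightarrow{F}\cE_C\twoheadrightarrow\alpha_C$, which are equivalent conditions. One small correction: the rank of $F$ is \emph{always} $2^m$ (this is exactly the content of the conjugate filtration \eqref{conjugate filtration}), not only at ordinary points; ordinarity enters your argument only at the transversality step, so the proof goes through unchanged once you adjust that attribution.
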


\begin{proof}
Let $c$ be a closed point in the intersection of ordinary locus and $C$. Then restricted to $c$,  consider the composition $F':\cE^{(p)}_C \ra \a_C$ in the following diagram 
\[\xymatrix{ &\o_C \ar[d]&\\
\cE^{(p)}_C \ar@{-->}[dr]^{F'} \ar[r]^{ F_C} &\cE_C \ar[d]^\pi  \\
&\a_C&.
}\] Since $X_c$ is ordinary, $F'_c$ is surjective. 

If $s(1)\neq 1$, Without loss of generality, suppose $s(1)=2$. Note by \ref{descent to W(k)}, 
\begin{align*}
F'(\cE^{(p)}_C)&=\pi\circ  F_C(\cV^{(p)}_1 \otimes\cV^{(p)}_2 \otimes \cV^{(p)}_3 \cdots \otimes \cV^{(p)}_{m+1})_C \\
&=\pi\circ \otimes\phi_i (\cV^{(p)}_1 \otimes\cV^{(p)}_2 \otimes \cV^{(p)}_3 \cdots \otimes \cV^{(p)}_{m+1})_C
\end{align*} 
 From the conjugate spectral sequence, $F_C$ factors through $\a^{(p)}_C$ and $\a=(\cV_1/\scr{L})\otimes {\cV_2} \otimes {\cV_3} \cdots \otimes \cV_{n+1}$, thus $\phi_1: \cV^{(p)}_{1\, C} \ra \cV_{2\, C} \otimes \cL_{1\, C}$ factors through $ \cV^{(p)}_{1\,C}/\scr{L}^{(p)}$, and the image of $\bar\phi_1$ has rank 1. But $\dim_k {\cV_2}_{|c}=2$. So $F'$ can not be surjective. Contradiction.
\end{proof}

Therefore we have 
\begin{equation} \label{phi}
\phi_1:\cV^\s_1 \ra \cV_1 \otimes \cL_1
\end{equation}




\begin{rmk} \label{intersects with ordinary}
The Mumford curve $M$ is defined over the reflex field $K$, and let $\frak{p}$ be the prime of $K$ over $p$. 

Let $r=[K_\frak{p}: \Q_p]$. Then by \cite[Theorem 1.2]{Zuo2}, there are two Newton polynomials in $C/k$, it is either $\{2^{m+1+\e(D)}\times \frac{1}{2}\}$ or $\{2^{m+1-r+\e(D)}\times 0, 2^{m+1-r+\e(D)}. \binom{r}{i} \times \frac{i}{r}\cdots, 2^{m+1-r+\e(D)}\times 1\}$. So $C$ intersects with ordinary locus if and only if $r=1$.

So there are infinitely many prime $p$ over which the reduction of Mumford curve at $p$ is generically ordinary.
\end{rmk}

\section{The surjectivity of $\s^*-\mathrm{Id}$ on the Picard group} \label{the surjectivity}

Our purpose is to construct a rank 2 Dieudonne crystal in the tensor decomposition of $\cE$. We already have \[\phi_1: \cV_1 \ra \cV_1 \otimes \cL_1.\] So it only remains to ``eliminate" $\cL_1$. We can achieve this goal in next section and the key ingredient is \ref{surjectivity} which we will prove in this section. 

Let $\s$ be the absolute Frobenius of $ C/k$ and $\Pic(C/W(k)_{\cri})$ denote the group of the rank 1 crystals on $C$. The following general principle guarantees $\cL_1=\Pp(\sL')$.

\begin{prop} \label{surjectivity}
The group endomorphism $\s^*-\mathrm{Id}$ of $\Pic(C/W(k)_\cri)$ is surjective. 
\end{prop}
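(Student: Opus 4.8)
### Proof Proposal

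The plan is to understand $\Pic(C/W(k)_{\cri})$ concretely via the crystalline-to-de Rham dictionary already invoked in the paper, and then to reduce the surjectivity of $\s^*-\mathrm{Id}$ to a statement about line bundles with connection on $\tilde{C}$ together with a filtration of $\Pic(C/W(k)_{\cri})$ by the first-order structure. Recall that by \cite[Theorem 6.6]{Berthelot} a rank $1$ crystal on $C/W(k)_{\cri}$ is the same datum as a line bundle $\sL'$ on $\tilde{C}$ equipped with an integrable connection $\na$. Thus $\Pic(C/W(k)_{\cri})$ sits in an exact sequence whose ``topological'' part is $\Pic(\tilde{C})$ (the underlying line bundle) and whose ``infinitesimal'' part records the connection, i.e. a torsor under $H^0(\tilde{C},\O_{\tilde{C}})$-valued $1$-forms modulo exact ones, which on a proper smooth curve over $W(k)$ is controlled by $H^0(\tilde{C},\Omega^1_{\tilde{C}/W(k)})$. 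First I would make this structure precise, identifying $\Pic(C/W(k)_{\cri})$ as an extension built from $\Pic(\tilde C)$ and the additive group of global closed $1$-forms, so that the endomorphism $\s^*-\mathrm{Id}$ acts compatibly on each piece.

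Next I would prove surjectivity on each graded piece separately and then assemble. On the ``line bundle'' piece, $\s^*$ acts as multiplication by $p$ on $\Pic^0$ composed with a degree-$p$ operation (Frobenius pullback multiplies degrees by $p$), so $\s^*-\mathrm{Id}$ becomes an isogeny-type endomorphism of a $p$-divisible or formally smooth group; the relevant input is that the induced map on the relevant Picard functor over $W(k)$ is surjective because $p\cdot(\text{something}) - \mathrm{Id}$ is invertible or surjective after passing to the formal/$p$-adically complete setting. On the ``connection'' piece, $\s^*$ scales the additive part by a factor divisible by $p$ (since Frobenius kills the linear term of a $1$-form in characteristic $p$, and the crystalline lift introduces a factor of $p$), so there $\s^*-\mathrm{Id}$ is $(p\cdot u - \mathrm{Id})$ on a $W(k)$-module, which is an isomorphism by the geometric series $-(1-pu)^{-1} = -\sum_{n\ge 0}(pu)^n$ converging $p$-adically. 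I expect the additive piece to be the easy, clean case handled by this $p$-adic invertibility argument.

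The main obstacle will be the ``line bundle'' piece, specifically handling $\Pic(\tilde C)$ under $\s^*-\mathrm{Id}$ where $\s$ is the absolute Frobenius of the \emph{special} fiber lifted crystalline-theoretically. The degree map sends $\s^*-\mathrm{Id}$ to multiplication by $p-1$ on $\Z$, which is \emph{not} surjective; this signals that one must be careful about what ``$\Pic$'' means here — I would argue that the relevant crystalline Picard group only sees line bundles of degree $0$ (or that the target $\cL_1$ we need to hit has degree $0$, since $\otimes_i\cL_i\cong\O_{\tilde C}$ forces each $\cL_i$ to have degree $0$ up to the tensor relation), so the degree obstruction does not arise. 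Restricting to $\Pic^0$, the endomorphism $\s^*-\mathrm{Id}$ is then an isogeny of the abelian-variety-like object $\Pic^0(\tilde C)$, surjective because its kernel is finite (as $\s^*$ and $\mathrm{Id}$ have no common eigenvalue $1$ on the relevant Dieudonné/Tate module — equivalently $1$ is not a Weil number $\s$-eigenvalue), and a surjective-kernel-finite endomorphism of a smooth connected commutative group scheme over an algebraically closed/complete base is surjective on points.

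Assembling the two pieces via the five-lemma-type diagram chase on the extension completes the argument: given any target crystal $\cL_1$, I would first correct its underlying line bundle using surjectivity on $\Pic^0(\tilde C)$, then correct the residual connection term using $p$-adic invertibility on the additive piece, obtaining a crystal $\sL'$ with $(\s^*-\mathrm{Id})(\sL')=\cL_1$. The cleanest packaging is to present $\Pic(C/W(k)_{\cri})$ as a $p$-adically complete group, show $\s^*-\mathrm{Id}$ is surjective modulo $p$ (reducing to the special fiber, where it is the Lang-type map $\s^*-\mathrm{Id}$ on $\Pic(C/k)$, surjective by a Lang-isogeny argument since $C$ is proper over the algebraically closed field $k$), and then lift by successive approximation / completeness. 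I would in fact prefer this last formulation — reduce mod $p$ to a Lang isogeny surjectivity and bootstrap — as it sidesteps the delicate bookkeeping of the connection piece, and I expect the mod-$p$ Lang surjectivity to be the true crux.
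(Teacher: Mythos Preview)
Your preferred formulation at the end --- reduce modulo $p$ and lift by $p$-adic successive approximation --- is exactly the route the paper takes. The paper makes the filtration precise via the sheaf sequence $0\to(1+p\cO_C)^*\to\cO_C^*\to(\cO_C/p)^*\to 0$ on the crystalline site, identifying $(1+p\cO_C)^*\cong\cO_C$ by $\exp/\log$ (this is where $p>2$ enters), and obtains
\[H^1(C/W(k)_\cri,\cO_C)\longrightarrow \Pic(C/W(k)_\cri)\longrightarrow \Pic^0(C/k_\cri)\longrightarrow 0.\]
On the right-hand piece, note that $\Pic^0(C/k_\cri)$ consists of line bundles \emph{with connection} on $C$, and there $\s^*(\sL,\na)=(\sL,\na)^{\otimes p}$, so $\s^*-\mathrm{Id}$ is the $(p-1)$-th power map; the paper checks $(p-1)$-divisibility of both the line bundle (divisibility of $\mathrm{Jac}(C)$ over algebraically closed $k$) and the connection (the induced map on the $k^g$-torsor of connections is affine with invertible linear part $p-1\in k^\times$). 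Your ``Lang isogeny'' remark covers only the line-bundle half of this, and is in any case a slight misnomer: $\s^*$ on $\Pic^0(C)$ is multiplication by $p$, not the Frobenius endomorphism of the Jacobian, so the relevant surjectivity is just that $[p-1]$ is an isogeny. On the left-hand piece the paper uses the semisimple/nilpotent decomposition of $\s^*$ on $H^1_\cri\otimes k$ and then lifts by successive approximation --- your bootstrap, but note $\s^*$ is not globally divisible by $p$ on this piece, so the geometric-series shortcut does not apply.

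Your first approach, filtering over $\tilde C$ by ``underlying line bundle'' versus ``connection'', has a genuine gap: the forgetful map $\Pic(C/W(k)_\cri)\to\Pic^0(\tilde C)$ is not $\s^*$-equivariant, because there is no global lift of Frobenius to $\tilde C$. The crystalline $\s^*$ is computed locally via chosen lifts $\tilde\s_U$ and glued using the Taylor isomorphism built from the connection; for $(\cO_{\tilde C},d+\omega)$ with $\omega\in H^0(\tilde C,\O^1_{\tilde C})$, the resulting transition cocycle for the underlying bundle of $\s^*(\cO_{\tilde C},d+\omega)$ depends on $\omega$ and need not be a coboundary. Thus $\s^*$ of a crystal with trivial underlying bundle can have nontrivial underlying bundle (lying in $\ker(\Pic^0(\tilde C)\to\Pic^0(C))$), so no endomorphism of $\Pic^0(\tilde C)$ makes your short exact sequence $\s^*$-equivariant, and the five-lemma cannot be run there. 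Reducing first to $C/k$, where $\s$ is honestly defined on the base, is exactly what sidesteps this --- and is why the paper filters by mod-$p$ reduction rather than by forgetting the connection.
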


\subsection{The proof }

\begin{lemma} \label{W(k)} $\s^*-\mathrm{Id}$ acts on $W(k)$ surjectively. \end{lemma}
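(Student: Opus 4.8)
The plan is to read the statement as an assertion about the additive group $W(k)$: writing $\s$ also for the Frobenius automorphism of $W(k)$ lifting $x\mapsto x^p$ on $k$, I must show that for every $a\in W(k)$ the equation $\s(w)-w=a$ has a solution $w\in W(k)$. The strategy is to solve this equation modulo successive powers of $p$ and then pass to the $p$-adic limit, using that $W(k)=\varprojlim_n W_n(k)$ is $p$-adically complete. The engine of the whole argument is the base case $n=1$, where $W_1(k)=k$, the reduction of $\s-\mathrm{Id}$ is the Artin--Schreier map $x\mapsto x^p-x$, and surjectivity is precisely the solvability of $x^p-x=\bar a$ in $k$ for every $\bar a$, which holds because $k$ is algebraically closed.

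First I would prove by induction on $n$ that $\s-\mathrm{Id}$ is surjective on each truncation $W_n(k)$. For the inductive step I use the short exact sequence of additive groups
\[0\ra k \xrightarrow{\;\bar x\,\mapsto\, p^{n-1}[\bar x]\;} W_n(k)\ra W_{n-1}(k)\ra 0,\]
where $[\,\cdot\,]$ denotes the Teichm\"uller lift. One checks that this first map is additive modulo $p^n$ and intertwines $\s$ with the Frobenius on $k$, so that $\s-\mathrm{Id}$ acts compatibly on the sequence, inducing $\bar x\mapsto \bar x^p-\bar x$ on the subobject $k$ and $\s-\mathrm{Id}$ on the quotient $W_{n-1}(k)$. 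Both of these are surjective (the former by the base case, the latter by the inductive hypothesis), so the snake lemma forces the cokernel of $\s-\mathrm{Id}$ on the middle term $W_n(k)$ to vanish.

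Finally I would pass to the inverse limit. The kernel of $\s-\mathrm{Id}$ on $W_n(k)$ is the fixed subgroup $W_n(\F_p)\cong \Z/p^n\Z$, which is finite, so the inverse system of kernels (with the natural reduction maps) satisfies the Mittag--Leffler condition and its ${\varprojlim}^{1}$ vanishes. Hence applying $\varprojlim_n$ to the exact sequences $0\ra \Z/p^n\Z\ra W_n(k)\xrightarrow{\s-\mathrm{Id}} W_n(k)\ra 0$ preserves surjectivity, yielding surjectivity of $\s-\mathrm{Id}$ on $W(k)=\varprojlim_n W_n(k)$. Equivalently, one can phrase this step as explicit $p$-adic successive approximation: a solution modulo $p^n$ lifts to one modulo $p^{n+1}$ because the obstruction lies in the top graded piece $p^nW_{n+1}(k)\cong k$, on which $\s-\mathrm{Id}$ is again the Artin--Schreier map, and the resulting partial solutions form a $p$-adically Cauchy sequence. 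I expect the only genuinely delicate point to be the bookkeeping in the inductive step, namely verifying that $\bar x\mapsto p^{n-1}[\bar x]$ is additive modulo $p^n$ and $\s$-equivariant, so that the Frobenius really does reduce to the Artin--Schreier map on the graded pieces; everything else is formal once the base case is in hand.
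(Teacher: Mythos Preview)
Your proposal is correct and follows essentially the same approach as the paper: both arguments reduce to the surjectivity of the Artin--Schreier map $x\mapsto x^p-x$ on the algebraically closed field $k$, and then lift solutions through successive powers of $p$ using the $p$-adic completeness of $W(k)$. Your inductive/snake-lemma packaging on $W_n(k)$ followed by a Mittag--Leffler passage to the limit is a slightly more formal rendering of the same idea, and the ``equivalent'' successive-approximation description you give at the end is exactly the argument the paper writes down.
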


\begin{proof}
Since $k$ is algebraically closed, $\Pp$ acts on $k$ surjectively. Then for any $b\in W(k)$, we can find \[\Pp(a_0)=b+pb_1\] and there exists $a_1, a_2, a_3, \cdots$ such that 
\[\Pp(a_1)=b_1 + pb_2,\] \[\Pp(a_2)=b_2+ pb_3,\] \[\Pp(a_3)=b_3+pb_4\cdots\]
Then $\Pp(\sum_i p^i a_i)=b$. In fact, since $W(k)$ is p-adically complete, $\sum p^i a_i \in W(k)$ and $\Pp(\sum p^i a_i )-b$ is contained in $p^n W(k)$ for any $n$. Therefore $\Pp(a+\sum p^i a_i )-b=0$.
\end{proof}

Now we recall the definition of Atiyah class. For a more detailed explanation, we refer the reader to \cite[10.1]{Dan}. 

Let $\scr{I}$ be the ideal sheaf of the diagonal set of $\tilde X\times \tilde X$ and $\cO_{2\D}=\cO_{\tilde X\times \tilde X}/ \scr{I}^2 $.
\begin{defn}
For any smooth proper variety $\tilde X$ and vector bundle $V$ over $\tilde X$, the Atiyah class is the extension class of \[0\ra V\otimes \O^1_{\tilde X} \ra {p_1}_*(p^*_2 V \otimes \cO_{2\D}) \ra V \ra 0.\]
\end{defn}
Atiyah class is the unique obstruction to the existence of a connection on $V$.

By \cite[Remark 3.7]{Atiyah class}, the Atiyah class of any line bundle coincides with its first Chern class. So  line bundles with a connection over a curve are exactly those of degree 0. 
\begin{lemma} \label{restriction}
The  restriction of $\s^*-\mathrm{Id}$ to $\mathrm{Pic}^0(C/k_\cri)$ is surjective. 
\end{lemma}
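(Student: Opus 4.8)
The plan is to reduce the statement to the Jacobian of $C$ and to exploit the explicit action of the absolute Frobenius on line bundles. By the Atiyah-class discussion preceding the lemma, every rank $1$ crystal has underlying line bundle of degree $0$, so $\mathrm{Pic}^0(C/k_\cri)$ is identified with the degree-$0$ Picard group of $C$, that is, with the points of the Jacobian $J$ of $C$ (the integrable connection attached to a crystal plays no role in what follows). Writing the group additively, it therefore suffices to show that the operator $\Pp$ is surjective on $J$. I would first record the reduction exact sequence $0\to \widehat J(\mm)\to J(W(k))\to J(k)\to 0$, whose kernel $\widehat J(\mm)$ is the group of $\mm$-valued points of the formal group of $J$, and treat the two outer terms separately.

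The heart of the argument is the computation of $\s^*$ on the special fibre. Since the absolute Frobenius acts on $\cO_C^*$ by $u\mapsto u^p$ (its structure map $\s^{\sharp}$ is the $p$-th power map), the induced endomorphism of $\mathrm{Pic}(C)=H^1(C,\cO_C^*)$ is multiplication by $p$; equivalently $\s^*\cL\cong \cL^{\otimes p}$ for every line bundle $\cL$. Hence on $J(k)$ the operator $\Pp$ acts as multiplication by $p-1$. As $p-1$ is prime to $p$, the map $[p-1]$ is a separable isogeny of the abelian variety $J$, and being a nonzero isogeny over the algebraically closed field $k$ it is surjective on $k$-points. (This is the same circle of ideas as Lang's theorem, applied to the connected commutative group $J$.) This disposes of the term $J(k)$.

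For the remaining term I would argue that $\Pp$ is in fact bijective on the formal part $\widehat J(\mm)$: there the Frobenius is topologically nilpotent, since it strictly increases the $p$-adic filtration on $\widehat J(\mm)$, so $\Pp$ is invertible with inverse $-\sum_{n\ge 0}(\s^*)^n$, exactly the $p$-adic successive-approximation used in \ref{W(k)}. Assembling the two computations by the snake lemma then yields the surjectivity of $\Pp$ on all of $J$, hence on $\mathrm{Pic}^0(C/k_\cri)$, and this is also the mechanism by which the present lemma will combine with \ref{W(k)} to give \ref{surjectivity}. The step I expect to be most delicate is this interface between the special fibre and the integral lift: one must know that the reduction map $J(W(k))\to J(k)$ is surjective (smoothness of $J$) and that the crystalline Frobenius really induces multiplication by $p$ on the underlying degree-$0$ line bundles, after which everything reduces to the elementary surjectivity of $[p-1]$ over an algebraically closed field and to \ref{W(k)}.
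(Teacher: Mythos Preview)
Your proposal rests on a misidentification of the group $\mathrm{Pic}^0(C/k_\cri)$. A rank~$1$ crystal on the site $C/k_\cri$ is \emph{not} just a degree-$0$ line bundle: it is a pair $(\sL,\nabla)$ consisting of a degree-$0$ line bundle on $C$ together with an integrable connection. The forgetful map $\mathrm{Pic}^0(C/k_\cri)\to \mathrm{Pic}^0(C)=J(k)$ is surjective but has kernel $H^0(C,\O^1_C)\cong k^g$ (the connections on $\cO_C$), so you cannot simply ``forget the connection''. In particular, $\mathrm{Pic}^0(C/k_\cri)$ is the group of $k$-points of the universal vector extension $J^\natural$, not of $J$, and certainly not $J(W(k))$.

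Because of this, the entire discussion of the reduction sequence $0\to\widehat J(\mm)\to J(W(k))\to J(k)\to 0$ and of the formal group is beside the point: the lemma lives entirely in characteristic $p$ (the base of the crystalline site here is $\Spec k$, not $\Spec W(k)$), and no $p$-adic lifting enters. That passage to $W(k)$ is exactly what the \emph{other} lemma \ref{surjective on kernel} and the assembly in \ref{surjectivity} take care of, not this one.

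Your central computation, that $\s^*\sL\cong\sL^{\otimes p}$ and hence $\Pp$ acts as multiplication by $p-1$ on the underlying line bundle, is correct and is also the paper's starting point. What remains, and what the paper actually does, is to check the same for the connection: since the set of connections on a fixed line bundle is a torsor under $\G(\O^1_C)=k^g$ and the $(p-1)$-st tensor power induces multiplication by $p-1$ on this torsor (a short Leibniz computation), one can solve $(L,\nabla_L)^{p-1}\cong(\sL,\nabla)$ simultaneously for $L$ (divisibility of $J(k)$) and for $\nabla_L$ (invertibility of $p-1$ in $k$). If you replace your $J(W(k))$ paragraph with this connection argument, the proof becomes correct and essentially coincides with the paper's.
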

\begin{proof} Note the rank 1 crystal on the site $C/k_\cri$ is equivalent to a line bundle on $C/k$ with connection. For any $\sL\in \Pic (C/k_\cri)$, $\s^* (\sL)=\sL^p$. So it suffices to show that for any degree 0 line bundle with connection $(\sL, \nabla)$, there exists a line bundle with connection $(L, \nabla_L)$ such that \[(L, \nabla_L)^{p-1}\cong (\sL, \nabla).\]

Since $k$ is algebraically closed, the Jacobian $\mathrm{Jac}(C/k)$ is a divisible group. Therefore we always can find a line bundle $L\in \mathrm{Jac}(C/k)$ such that $L^{p-1}\cong \sL$. 

Note the set of connections of $\sL$  is a torsor under 
\[\Hom(\sL, \sL\otimes \o_{C})=\G(\o_{C})=k^g.\] The same for $L$. For any two connections $\nabla_L, \nabla'_L$ on $L$, let $\na_L-\na'_L=h\in \G(\o_{C})$. 

Thus to find the connection $\nabla_L$, it suffices to show the $(p-1)-$th power is an \textsl{injection} from the connections on $L$ to the connections on $\sL$. Then for any local section $\otimes_i s_i$,  
\begin{align*}&((\na_L+h)^{p-1}-\nabla^{p-1}_L)(\otimes^{p-1}_{i=1} s_i)\\
=& \sum^{p-1}_{i=1} \cdots \otimes h.s_i\otimes \cdots \\
=& (p-1) (\prod_i s_i) h (1\otimes 1\otimes \cdots \otimes 1). \end{align*} 

Therefore for any connection $\nabla_\sL$, if $g=\nabla_\sL -\nabla^{p-1}_L$, then $(\nabla_L+\frac{g}{p-1})^{p-1}=\nabla_\sL$. So $\Pp$ acts on $\Pic^0(C/k_\cri)$ surjectively. \end{proof}

\begin{lemma} \label{surjective on kernel}
$\s^*-\mathrm{Id}$ maps $H^1(C/W(k)_\cri, \cO_{\tilde C})$ to itself surjectively.
\end{lemma}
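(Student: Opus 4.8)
The plan is to reduce the statement modulo $p$ to a problem of semilinear algebra over $k$, and then to lift by the same $p$-adic approximation used in \ref{W(k)}. Write $M := H^1(C/W(k)_\cri, \cO_{\tilde C})$. By the crystalline--de Rham comparison applied to the smooth proper lift $\tilde C$, $M \cong H^1_{\mathrm{dR}}(\tilde C/W(k))$ is a finite free $W(k)$-module, and $\s^*$ acts on it as a $\s$-semilinear endomorphism $F$. Because $\s$ fixes $p$, we have $F(p^i x)=p^i F(x)$, so $\Pp=F-\mathrm{Id}$ commutes with multiplication by $p$; as $M$ is $p$-adically complete and separated, the successive-approximation argument of \ref{W(k)} reduces us to proving that $\Pp$ is surjective on $M/pM$.

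Now $M/pM \cong H^1_{\mathrm{dR}}(C/k)$, a finite-dimensional $k$-vector space, and $\bar F := F \bmod p$ is the Frobenius-linear (that is, $p$-linear) operator induced by the absolute Frobenius $\s$. So everything comes down to the following fact of semilinear algebra: for a $p$-linear endomorphism $\bar F$ of a finite-dimensional vector space $\bar M$ over the algebraically closed field $k$, the additive map $\bar F-\mathrm{Id}$ is surjective.

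I would prove this by the Fitting decomposition $\bar M=\bar M_{\mathrm{bij}}\oplus \bar M_{\mathrm{nil}}$ for $\bar F$, which is available for $p$-linear maps since $k$ is perfect (so that $\bar F(\bar M)$ is a $k$-subspace and rank--nullity holds), with $\bar F$ bijective on the first summand and nilpotent on the second. On $\bar M_{\mathrm{nil}}$ the operator $\mathrm{Id}-\bar F$ is invertible by its finite Neumann series, so $\bar F-\mathrm{Id}$ is bijective there. On $\bar M_{\mathrm{bij}}$ the operator $\bar F$ is bijective and $p$-linear; over an algebraically closed field such an operator is trivializable, i.e.\ by Lang's theorem the fixed space $\bar M_{\mathrm{bij}}^{\bar F=1}$ is an $\F_p$-structure and in a suitable basis $\bar F$ is the coordinatewise $p$-power map. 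Surjectivity of $\bar F-\mathrm{Id}$ on this summand then reduces to surjectivity of $x\mapsto x^p-x$ on $k$, which holds as $k$ is algebraically closed. Combining the two summands proves surjectivity mod $p$, and the lift gives the proposition.

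The main obstacle is the bijective (unit-root) part: the nilpotent/positive-slope part is easy because there $\mathrm{Id}-\bar F$ is manifestly invertible, whereas on the slope-zero part one genuinely needs the triviality of etale $\varphi$-modules over an algebraically closed field (equivalently Lang's theorem) in order to land on the Artin--Schreier map. Alternatively one could work integrally by splitting the $F$-isocrystal $M\otimes B(k)$ into its unit-root part, where $\Pp$ is surjective by an Artin--Schreier--Witt argument as in \ref{W(k)}, and its positive-slope complement, where $F$ is topologically nilpotent so that $\mathrm{Id}-F$ is invertible; the integral bookkeeping is simply cleaner modulo $p$.
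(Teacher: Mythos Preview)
Your proof is correct and essentially identical to the paper's. The paper also reduces modulo $p$ via the comparison isomorphism and $p$-adic approximation, invokes the decomposition $V=V_s\oplus V_n$ of a $p$-linear endomorphism (citing Mumford's \emph{Abelian Varieties}, p.~143, which is exactly your Fitting splitting together with the existence of a basis of $F$-fixed vectors on the bijective part), handles $V_n$ by nilpotence, and on $V_s$ solves $\lambda^p-\lambda=1$ in $k$ to hit each basis vector---your Lang/Artin--Schreier step phrased slightly differently.
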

\begin{proof}
By comparison theorem,
\[H^1(C/W(k)_\cri, \cO_{\tilde C}) \cong \mathbb{H}^1(\tilde C, \O^._{\tilde C})\cong W(k)^{2g}.\] Let $N$ denote the free $W(k)-$module with $\s^*$ action. Then $V:=N/pN$ is a $k-$vector space with $p-$linear action. By a result in (\cite[Page 143]{AV}), \[V=V_s\oplus V_n \] where $V_s$ is the semisimple part and $V_n$ the nilpotent part. 
On $V_n$, since $\s^*$ acts nilpotently, $\Pp$ is invertible and hence surjective. On $V_s$, by \ref{W(k)}, we can find $\l$ such that $\Pp(\l)=1$. Then for each $k$, $\Pp(\l x_k)=x_k$. Therefore $\Pp$ acts on $V$ surjectively. 

Back to $N$, for any $b\in N$, we can choose $a_0$ such that \[\Pp(a_0)=b+pb_1.\]Then choose $a_1$ such that \[\Pp(a_1)=b_1+pb_2.\] Following this way, we can find $a_2, a_3, \cdots$.
Similar to the proof of \ref{W(k)}, we have 
\[\Pp(a_0+pa_1+p^2a_2+ \cdots+p^na_n + \cdots)=b.\]  \end{proof}

Now we can prove \ref{surjectivity}:
\begin{proof}
Note $\Pic(C/W(k)_\cri)\cong H^1(C/W(k)_\cri, \cO^*_{C})$. We have the sequence \[0\ra (1+p\cO_{C})^* \ra \cO^*_{C} \ra (\cO_{C}/p)^* \ra 0\] and $\Pp$ acts on the long exact sequence.
Since $\cha k>2$,  the exponential and logarithm maps converge and thus give an isomorphism between abelian groups 
\[\cO_{C} \cong (1+p\cO_{C})^*.\] 
So the cohomology groups are isomorphic: 
\[H^1(C/W(k)_\cri, \cO_C) \cong H^1(C/W(k)_\cri, (1+p\cO_C)^*).\] 
We have the long exact sequence 
\begin{align*}
&H^1(C/W(k)_\cri, \cO_C) \ra H^1(C/W(k)_\cri, \cO^*_C) \xrightarrow{g}& \\
& H^1(C/W(k)_\cri, (\cO_C/p)^*)  \cong  \Pic(C/k_\cri)  \ra H^2(C/W(k)_\cri, \cO_C)& .
\end{align*}
By \cite[Theorem 6.6]{Berthelot} ,  the category of crystals on $C$ is equivalent to the category of vector bundles with a connection on $\tilde C$. Therefore $\Pic(C/W(k)_\cri)$ is isomorphic to the group of line bundles with a connection on $\tilde C$ and $g$ is the pull back of such line bundle from $\tilde C$ to $C$. Therefore $\im g\subset \Pic^0(C/k_\cri)$. 

Since the obstruction to deform the line bundle from $C$ to $\tilde C$ vanishes and the deformation preserves the degree, $\Pic^0(\tilde C) \ra \Pic^0(C)$ is surjective. In fact, for any degree 0 line bundle $\sL$ on $\tilde C$, it corresponds to a divisor $\sum_i n_ip_i$ with each $p_i$ a $k-$point. Then by Hensel's lemma, each $p_i$ lifts to a $W(k)-$point $\tilde p_i$( though not uniquely). Let $\sum_i n_i\tilde p_i=\tilde \sL \in \Pic^0(\tilde C)$ and then $\tilde \sL$ reduces to $\sL$.

For the connection, for any $(\sL, \nabla)\in \Pic^0(C/k_\cri)$, choose a lifting $\tilde \sL\in \Pic^0(\tilde C)$ of $\sL$ and a connection $\tilde \nabla$ on $\tilde \sL$. Let $\nabla'$ be the reduction of $\tilde \nabla$, then $\nabla'-\nabla=f \in \G(\o_{C})$. Choose $\tilde f\in \G(\o_{\tilde C})$ such that $\tilde f$ reduces to $f$. Then $\tilde \nabla-\tilde f$ reduces to $\nabla$. And $g(\tilde \sL, \tilde \nabla-\tilde f)=(\sL, \nabla)$. 

Therefore $\im g=\Pic^0(C/k_\cri)$. So we have the following sequence:
\[H^1(C/W(k)_\cri, \cO_{C}) \ra H^1(C/W(k)_\cri, \cO^*_{C}) \ra \Pic^0(C/k_\cri) \ra 0\]

By \ref{restriction} and \ref{surjective on kernel}, $\s^*-\mathrm{Id}$ induces surjective endomorphisms on $H^1(C/W(k)_\cri, \cO_{C}) $ and $ \Pic^0(C/k_\cri)$. Therefore $\Pp$ maps $H^1(C/W(k)_\cri, \cO^*_{C})$ surjectively on itself. 
\end{proof}

\begin{rmk} \label{p>2}
In the proof of \ref{surjectivity}, we use the convergence of exponential and logarithm, which are true if and only if the characteristic $p>2$.
\end{rmk}

\section{The Dieudonne crystal $\cV$ and the unit crystal $\cT$}
\label{the crystals}

Now by \ref{surjectivity} we can choose $\sL' \in \Pic(C/W(k)_\cri)$ such that $\Pp(\sL')=\cL^{-1}_1$(for $\cL_1$ see \ref{phi}). Then $\phi_1$ induces an isomorphism 
\begin{equation} \label{gamma}
 \g: \cV^\sigma_1\otimes \sL'^\sigma \otimes B(k)\ra \cV_1\otimes \sL'\otimes B(k) .  
\end{equation}
Let $\cV=\cV_1\otimes \sL$. 
Similarly, we have the isomorphism 
\[\b: \cV^\sigma_2\otimes \cV^\sigma_3 \cdots \otimes \cV^\s_{m+1}\otimes (\sL'^{-1})^ \sigma \otimes B(k)\ra \cV_2 \otimes \cV_3 \cdots \otimes \cV_{m+1} \otimes \sL'^{-1}\otimes B(k).\] Denote $\cV_2 \otimes \cV_3 \cdots \otimes \cV_{m+1} \otimes \sL'^{-1}$ as $\cT$. Therefore as crystals, 
\[\cE \cong \cV\otimes \cT\]  and as a morphism between crystals $F= \g\otimes \b$. Then $ V=pF^{-1}=p\g^{-1} \otimes \b^{-1}.$ 

\begin{lemma}
The morphism $\b: \cT^\s \ra \cT$ is an isomorphism between crystals.
\end{lemma}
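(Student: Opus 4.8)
The plan is to prove that $\b$, which we already know is an isomorphism of isocrystals, is in fact an isomorphism of crystals. Since $\b$ is already a morphism of crystals (we have $F=\g\otimes\b$ as crystal morphisms), it is horizontal, and if its underlying $\cO_{\tilde C}$-linear bundle map is an isomorphism then its inverse is horizontal too; so by Berthelot's equivalence \cite[Theorem 6.6]{Berthelot} it suffices to show that the bundle map $\b\colon\cT^\s\to\cT$ is an isomorphism of vector bundles. As $\cT^\s$ and $\cT$ are locally free of the same rank $2^m$ over the proper $W(k)$-scheme $\tilde C$, this holds exactly when $\det\b$ is a nowhere-vanishing section of $\Hom(\det\cT^\s,\det\cT)$, and by Nakayama together with properness of $\tilde C/W(k)$ it is enough to check this on the special fibre. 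Thus I reduce to showing that $\det\b_C$ vanishes nowhere on $C$, where $\b_C\colon\cT_C^{(p)}\to\cT_C$ is the reduction.

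The first ingredient is a degree computation. Since $\wedge^2\cV_i\cong\cO_{\tilde C}$ for each $i$ (Proposition~\ref{the structure of G}) and $\cT=\cV_2\otimes\cdots\otimes\cV_{m+1}\otimes\sL'^{-1}$, the determinant formula for tensor products gives $\det\cT\cong\sL'^{\otimes(-2^m)}$. Because $\sL'\in\Pic(C/W(k)_\cri)$ carries an integrable connection, its underlying bundle has degree $0$ (as recalled before Lemma~\ref{restriction}), so $\deg\det\cT_C=0$. Since $\det(\cT_C^{(p)})\cong(\det\cT_C)^{\otimes p}$, the section $\det\b_C$ lies in $H^0\bigl(C,(\det\cT_C)^{\otimes(1-p)}\bigr)$, a line bundle of degree $0$. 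On a smooth proper curve a nonzero global section of a degree-$0$ line bundle vanishes nowhere, so it remains only to show that $\det\b_C$ is not identically zero, i.e.\ that $\b_C$ is an isomorphism at one point.

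For this I use the ordinary locus. The Hodge filtration is compatible with $\cE=\cV\otimes\cT$: tensoring $\sL\subset\cV_1$ by $\sL'$ yields a line subbundle $\sL''\subset\cV=\cV_1\otimes\sL'$ with $\o=\sL''\otimes\cT$, so the Hodge quotient $\pi\colon\cE_C\to\a_C$ of Proposition~\ref{s(j)=j} factors as $\bar\pi\otimes\mathrm{id}_{\cT_C}$, where $\bar\pi\colon\cV_C\to\cV_C/\sL''_C$ has rank-one target. Since $F_C=\g_C\otimes\b_C$, the composite $F'=\pi\circ F_C$ equals $(\bar\pi\circ\g_C)\otimes\b_C$. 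At a closed point $c$ of the ordinary locus $F'_c$ is surjective (Proposition~\ref{s(j)=j}); as the image of a tensor product of linear maps is the tensor product of the images, surjectivity of $F'_c$ forces $\b_{C,c}$ to be surjective, hence an isomorphism because source and target both have dimension $2^m$. Therefore $\det\b_C(c)\neq0$, and by the previous paragraph $\b_C$ is an isomorphism at every point of $C$; lifting back to $\tilde C$ gives that $\b$ is an isomorphism of bundles, hence of crystals.

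The main obstacle is exactly the step promoting ``$\b_C$ is an isomorphism at the ordinary points'' to ``$\b_C$ is an isomorphism everywhere.'' The ordinary hypothesis only controls the generic behaviour of the Frobenius, and a priori $\det\b_C$ could acquire zeros at the non-ordinary points, meaning $\cT$ would fail to be unit-root there. It is precisely the degree-$0$ computation of the second paragraph—resting on the triviality of $\wedge^2\cV_i$ and on $\sL'$ carrying a connection—that excludes this and forces $\b_C$ to be an isomorphism on all of $C$.
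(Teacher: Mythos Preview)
Your proof is correct but follows a different route from the paper. The paper argues entirely locally: it uses the conjugate filtration, i.e.\ the fact that for the abelian scheme $X/C$ the Frobenius $F_C$ induces an inclusion of vector bundles $\a_C^{(p)}\hookrightarrow\cE_C$ at \emph{every} point of $C$. Writing $F_C=\g_C\otimes\b_C$ and $\a_C^{(p)}=(\cV/\sL'')_C^{(p)}\otimes\cT_C^{(p)}$, the fibrewise injectivity of this tensor product forces both $\bar\g_C$ and $\b_C$ to be fibrewise injective everywhere, hence $\b_C$ is an isomorphism; Nakayama then lifts this to $\tilde C$. Your argument, by contrast, only checks $\b_C$ is invertible at a single ordinary point (via surjectivity of the Hasse--Witt map there) and then globalises by the degree computation $\deg\det\cT_C=0$, which rests on $\wedge^2\cV_i\cong\cO_{\tilde C}$ and on $\sL'$ carrying a connection. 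The paper's approach is shorter and avoids the degree bookkeeping, but it tacitly uses that the conjugate filtration is a genuine sub-\emph{bundle} (equivalently, that $\mathrm{im}\,F_C=\ker V_C$ is locally a direct summand for abelian schemes), a structural fact not isolated in the text. Your approach is more hands-on and makes explicit why the non-ordinary points cause no trouble; it also illustrates that the generically-ordinary hypothesis, which you already invoked in Proposition~\ref{s(j)=j}, suffices on its own once the degree is pinned down.
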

\begin{proof}
We have known that $\g\neq 0 \pmod p$. Over $C$, \ref{conjugate filtration}shows the Frobenius \[ F_C=\g_C\otimes \b_C: {\cE_C}^\sigma \ra {\cE_C}\] induces an injection
\[\a^{(p)}_C=(\cV_{1}/ \sL \otimes \bar \sL' )^{(p)}_C\otimes \cT^{(p)}_C \ra {\cE_C}\cong \cV_C \otimes \cT_C.\] Therefore $\b_C$ is an isomorphism between $\cT^\s_C$ and $\cT_C$. 

Note the fact that for any $W(k)-$algebra $R$ and any $r\in R$, if the image $\bar r \in \bar R$ over $k$ is a unit, then $r$ is a unit in $R$. So $\b$ is an isomorphism between crystals $\cT^\s$ and $\cT$.
\end{proof}

Then $\b^{-1}$ is also a morphism between crystals. Since $ V=pF^{-1}=p\g^{-1} \otimes \b^{-1}$,  so is $p\g^{-1}$. Therefore $F_\cV:=\g$ and $V_\cV:=\g^{-1}$ can serve as Frobenius and Verschiebung of $\cV$, which makes $\cV$ a Dieudonne crystal. The fact that $p^{-k'} \b^{-1}$ and $p^{-k} \b$ are isomorphisms implies $\cT$ is a unit root crystal. We have the following summary.

\begin{cor} \label{decomp of F}  \label{the filtration}
\[(\cV, F_{\cV}=p^k\g, V_{\cV}=p^{k'+1} \g^{-1})\] is a Dieudonne crystal, \[(\cT, F_{\cT}=p^{-k} \b)\] is a unit root crystal and 
\[(\cE,F) \cong (\cV, F_\cV) \otimes (\cT, F_\cT).\]
The Hodge filtration of $\cE$ comes from a sub line bundle $\sL \otimes \sL'$ of $\cV$. 
\end{cor}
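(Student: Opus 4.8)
The plan is to read the corollary as the assembly of the three facts established immediately above it: the underlying crystal isomorphism $\cE \cong \cV \otimes \cT$, the factorization $F = \g \otimes \b$ of the Frobenius as a morphism of isocrystals, and the preceding Lemma that $\b \colon \cT^\s \to \cT$ is an isomorphism of \emph{crystals} (not merely isocrystals). The only genuinely new work is the bookkeeping of powers of $p$: I must exhibit integers $k, k'$ for which $F_\cV := p^k\g$, $V_\cV := p^{k'+1}\g^{-1}$ and $F_\cT := p^{-k}\b$ are honest crystal morphisms satisfying the axioms of a Dieudonn\'e, resp.\ unit root, crystal, and for which the tensor factorization holds on the nose.

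First I would pin down the normalizations. By \ref{T and iso} each of $\g^{\pm 1}, \b^{\pm 1}$ is a morphism of effective isocrystals, hence becomes a morphism in $\cris(C)$ after multiplying by a suitable power of $p$; moreover $\g$ is already integral and nonzero mod $p$ (this was recorded in the proof of the Lemma). Since $\b, \b^{-1}$ are integral and mutually inverse by the Lemma, composing $F = \g\otimes\b$ with $\mathrm{id}_{\cV^\s}\otimes\b^{-1}$ and composing $V = pF^{-1} = p\g^{-1}\otimes\b^{-1}$ with $\mathrm{id}_{\cV}\otimes\b$ produces $\g\otimes\mathrm{id}_\cT$ and $p\g^{-1}\otimes\mathrm{id}_{\cT^\s}$; as $\cT$ is locally free of positive rank, this shows that $\g$ and $p\g^{-1}$ are themselves crystal morphisms. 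This fixes the admissible normalization and, comparing the two identities, forces $k + k' = 0$. The unit-root claim for $\cT$ is then immediate from the Lemma, since $F_\cT = p^{-k}\b$ and $p^{k}\b^{-1}$ are both integral and inverse to one another.

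Next I would verify the two crystal structures and the factorization. For $\cV$ the Dieudonn\'e axioms reduce to $F_\cV V_\cV = V_\cV F_\cV = p$, which reads $p^{k+k'+1}\,\mathrm{id} = p\,\mathrm{id}$ and holds because $k+k'=0$; integrality of both $F_\cV$ and $V_\cV$ was arranged above. The tensor factorization $(\cE, F)\cong(\cV, F_\cV)\otimes(\cT, F_\cT)$ is then the underlying isomorphism $\cE\cong\cV\otimes\cT$ together with the identity $F_\cV\otimes F_\cT = p^k\g\otimes p^{-k}\b = \g\otimes\b = F$, so the powers of $p$ cancel exactly.

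Finally the Hodge filtration. Recall from (\ref{the line bundle L}) that $\o \cong \sL \otimes \cV_2 \otimes \cdots \otimes \cV_{m+1}$ with $\sL \subset \cV_1$. Since $\cV = \cV_1\otimes\sL'$ and $\cT = \cV_2\otimes\cdots\otimes\cV_{m+1}\otimes\sL'^{-1}$, the sub line bundle $\sL\otimes\sL' \subset \cV$ satisfies $(\sL\otimes\sL')\otimes\cT \cong \sL\otimes\cV_2\otimes\cdots\otimes\cV_{m+1} \cong \o$ inside $\cV\otimes\cT \cong \cE$; thus the weight-$1$ Hodge filtration of $\cE$ is induced by $\sL\otimes\sL'\subset\cV$, as claimed. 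I expect the main obstacle to be the second step: confirming that the unit-root property of $\b$ (which itself rests on the generic ordinariness, via the conjugate filtration used in the Lemma) forces all the $p$-divisibility of $F$ onto the $\cV$-factor, so that $F_\cV$ and $V_\cV$ are simultaneously integral with $F_\cV V_\cV = p$; the remaining verifications are then purely formal.
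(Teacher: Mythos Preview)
Your proposal is correct and follows essentially the same route as the paper: the paragraph immediately preceding the corollary uses the Lemma that $\b$ is a crystal isomorphism to conclude $\b^{-1}$ is integral, then reads off from $V=p\g^{-1}\otimes\b^{-1}$ that $p\g^{-1}$ is integral, so that $\g$ and $p\g^{-1}$ serve as Frobenius and Verschiebung on $\cV$; the Hodge filtration statement is just (\ref{the line bundle L}) rewritten via the definitions of $\cV$ and $\cT$. Your bookkeeping with $k+k'=0$ is in fact more explicit than the paper's, which is somewhat loose about the values of $k,k'$ (the argument actually gives $k=k'=0$).
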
 Let the filtration $\rm{Fil}_\cV$ be $\sL\otimes \sL' \subset \cV$ and $\rm{Fil}_\cT$ be the trivial filtration.

Now we switch to BT groups.

\section{The BT groups corresponding to $\cV$, $\cT$ and $\cE$} \label{the BT group}

From \cite[Main Theorem 1]{de J}, we know that over a smooth curve $C/k$, the category of finite locally free Dieudonne crystals on $\cri(C/W(k))$ is equivalent to the category of BT groups on $C$. Obviously $(\cE,F,V)$ corresponds to $X[p^\infty]$. Let $G$ be the BT group over $C$ corresponding to $(\cV, F_\cV, V_\cV)$.  From \cite{de J}, we know the BT group $\bG$ induces a filtration of $\DD(\bG)_C=\cV_C$:
\begin{equation} \label{the filtration from bG} 0\ra \o_{G} \ra \cV_C \ra t_{G^*} \ra 0.\end{equation}

\begin{lemma}\label{same filtration}
The above filtration \ref{the filtration from bG} coincides with the filtration $\mathrm{Fil}_\cV \pmod p$.
\end{lemma}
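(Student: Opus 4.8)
We have two filtrations on the crystal $\cV_C$ (the reduction of $\cV$ over $k$). One is $\mathrm{Fil}_\cV = \sL \otimes \sL'$, arising as the image of the weight-1 Hodge filtration of $\cE$ under the tensor decomposition $\cE \cong \cV \otimes \cT$. The other, from the theory of de Jong, is the Hodge filtration $\o_G \subset \cV_C$ intrinsically attached to the Dieudonné crystal $(\cV, F_\cV, V_\cV)$ via the corresponding BT group $G$. Both are sub-line-bundles of the rank-2 bundle $\cV_C$, so to show they coincide it suffices to identify them inside $\cV_C$.

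**The plan.** The intrinsic filtration $\o_G \subset \cV_C$ is characterized by the Frobenius/Verschiebung structure: concretely, $\o_G$ is the kernel of the map $\cV_C \to t_{G^*}$ induced by $F_\cV \bmod p$, equivalently it is $\ker(\bar V_\cV)$ or $\im(\bar F_\cV)$ read off from the Dieudonné structure over $k$. So my first step is to recall, from de Jong, exactly how $\o_G$ is recovered from $(\cV_C, \bar F_\cV)$: the reduction $\bar F_\cV = \g_C \bmod p$ of the Frobenius factors through the conjugate filtration, and $\o_G$ is the resulting Hodge subbundle. The second step is to track the filtration $\mathrm{Fil}_\cV$ through the same decomposition. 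Since $\cE \cong \cV \otimes \cT$ as \emph{Dieudonné} crystals (by \ref{decomp of F}), the Hodge filtration $\o \subset \cE$ is compatible with this tensor product, and because $\cT$ carries the trivial filtration with $\cT$ a unit-root crystal, the Hodge filtration of $\cE$ equals $\mathrm{Fil}_\cV \otimes \cT$. So the question reduces from $\cE$ to $\cV$: I would argue that the Hodge subbundle $\o_G$ produced by de Jong's functor applied to $(\cV, F_\cV)$ must agree with the factor $\mathrm{Fil}_\cV$ of the Hodge filtration of $\cE$.

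**Executing the identification.** The cleanest route is functoriality of de Jong's equivalence under tensor products and reduction. The Hodge filtration attached to $X[p^\infty]$, namely $\o \subset \cE_C$, is by construction $\mathrm{Fil}_\cV \otimes \cT_C$. On the other hand, $\o$ must also equal $\o_G \otimes \cT_C$, since $\cT$ is unit-root (so its own Hodge filtration is trivial) and the Hodge filtration of a tensor product $\cV \otimes \cT$ of a Dieudonné crystal with a unit-root crystal is $\o_G \otimes \cT_C$. Comparing the two expressions for $\o$ inside $\cV_C \otimes \cT_C$ and using that $\cT_C$ is a (nonzero) invertible factor, I can cancel $\cT_C$ and conclude $\mathrm{Fil}_\cV = \o_G$ as subbundles of $\cV_C$. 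Alternatively, and perhaps more directly, I would use the conjugate spectral sequence \ref{conjugate filtration}: the computation already recorded in the proof of the preceding lemma shows that $F_C = \g_C \otimes \b_C$ carries $\a_C^{(p)} = (\cV_1/\sL)^{(p)}_C \otimes \cT^{(p)}_C$ isomorphically onto its image, and that $\b_C$ is an isomorphism. Thus the kernel of $F'_C : \cE_C^{(p)} \to \o_C^{(p)}$ — equivalently the piece of $\cV_C$ killed by $\bar V_\cV$ — is forced to be exactly $\mathrm{Fil}_\cV \bmod p$.

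**Where the difficulty lies.** The routine content is bookkeeping with the tensor decomposition; the genuine point requiring care is that de Jong's recipe recovers $\o_G$ as precisely the \emph{Hodge} (not conjugate) filtration, and that this matches the weight-1 filtration we started with rather than its Frobenius twist. I would pin this down by checking the normalization in \ref{the crystals}: we chose $F_\cV = \g$ (up to a power of $p$), and the sub-line-bundle $\sL \otimes \sL'$ was defined as the image of the Hodge filtration of $\cE$ under the decomposition, so it sits on the correct side of the conjugate spectral sequence by definition. The main obstacle is therefore ensuring the Frobenius/Verschiebung normalizations chosen in \ref{decomp of F} are consistent with de Jong's sign and twist conventions, so that $\o_G$ genuinely coincides with $\mathrm{Fil}_\cV$ and not with the quotient $t_{G^*}$; once that compatibility is verified, the identification is immediate from the rank-1-factor cancellation above.
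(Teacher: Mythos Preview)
Your proposal is correct and amounts to the same argument as the paper's: both reduce to identifying $\ker F_{\cV,C}$ with $(\sL \otimes \sL')^{(p)}_C$ (using that $\beta_C$ is an isomorphism together with the known kernel of $F_C$ on $\cE_C$), and then invoking de Jong's characterization of $\omega_G$ via this kernel. The paper states this in two lines, citing \cite[Theorem~2.5.2 and Remark~2.5.5]{deJ} for the uniqueness of the admissible filtration with prescribed Frobenius kernel, which is exactly what resolves the normalization worry you flag at the end.
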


\begin{proof} 
From \ref{decomp of F}, 
$\ker  F_{\cV \, C} = (\sL \otimes \sL')^{(p)}_C.$ By \cite[Theorem 2.5.2 and Remark 2.5.5]{deJ}, the subbundle of $\cV$ satisfying this condition is unique and  $\o_G \cong  \sL \otimes \sL'_C$. 
\end{proof}
Then the filtration $\rm{Fil}_\cV$ is just
\[0 \ra \o_{{G}} \ra \cV_C \ra t_{\bar{G^*}} \ra 0 .\]
Note $\cV_C$ admits a connection $\nabla: \cV_C \ra \cV_C \otimes \O_C$. The connection and the filtration induce the Higgs field: $\theta_{{G}}: \o_{{G}} \ra t_{{G}}\otimes \O_C$.
\[\xymatrix{
0 \ar[r] & \o_{{G}} \ar@/_/[rrd] \ar[r]& \cV_C \ar[d]^{\bar\nabla }\\
 &&\cV_C \otimes \O^1_{C} \ar[r] & t_{\bar{G^*}}\otimes \O^1_{C} \ar[r]& 0.
}\]

Since $\cT$ is a unit root crystal, by \cite[2.4.10]{BM}, $\cT$ comes from an etale BT group $\bH$ over $C$. In particular, $\DD(\bH[p^n])\cong \cT/p^n$ and each truncated $\cT/p^n$ comes from a local system \cite[Theorem 2.2]{Crew}
\[\r_n: \pi_1(C, c) \ra  GL(4, \Z/p^n) . \]
Then there exists a finite etale covering $f_n: C' \ra C$ such that $\pi_1(C', c)\cong \ker \r_n$. Therefore we have $f^*_n(\cT/p^n)\cong \cO_{C/W_n}^{\oplus m}$ as unit root $F-$crystals.  By \cite[2.4.1]{BM}, over smooth curve $C$, the category of finite locally free etale group schemes is equivalent to the category of $p-$torsion unit crystals.  Thus 
\begin{equation} \label{triviality} f^*_n(\bH[p^n])\cong (\Z/p^n)^{\oplus m}. \end{equation}

\begin{defn}
Define the binary operation between two BT groups: \[G\otimes H:=\colim_n (G[p^n]\otimes_\Z H[p^n]).\]
\end{defn}

\begin{rmk}
The inductive system $(G[p^n]\otimes_\Z H[p^n])$ is explained in \cite{Xia}. Note in general $G\otimes H$ is just an abelian sheaf rather than a group scheme. But in our case, $H$ is etale and  $G\otimes H$ is indeed a BT group and $(G\otimes H)[p^n]=G[p^n]\otimes_\Z H[p^n]$.
\end{rmk}

\begin{prop} \label{first order}
\[X[p^n]\cong \bGn .\]
\end{prop}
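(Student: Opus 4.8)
The plan is to reduce the statement to the crystalline Dieudonné equivalence of \cite{de J} and to compute the Dieudonné crystal of $\bGn$ by descent along the trivializing cover of $\bH$. Since $\DD$ is an equivalence between finite locally free Dieudonné crystals on $\cri(C/W(k))$ and BT groups, and restricts to an equivalence on the corresponding $p^n$-torsion objects, it suffices to produce an isomorphism of Dieudonné crystals
\[\DD(\bGn)\;\cong\;\DD(X[p^n])\;=\;\cE/p^n\;\cong\;(\cV/p^n)\otimes(\cT/p^n),\]
compatible with Frobenius and Verschiebung, where the last isomorphism and the tensor Frobenius $\g\otimes\b$ are those of Corollary \ref{decomp of F}. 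The central point is thus the compatibility $\DD(\bGn)\cong\DD(\bG[p^n])\otimes\DD(\bH[p^n])=(\cV/p^n)\otimes(\cT/p^n)$ of $\DD$ with the ad hoc operation $\otimes_\Z$ when the second factor is étale.

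I would establish this compatibility étale-locally and then descend. By (\ref{triviality}) there is a finite étale Galois cover $f_n\colon C'\to C$, with group $\Gamma_n=\im\r_n$, over which $f_n^*(\bH[p^n])\cong(\Z/p^n)^{\oplus 2^m}$. Over $C'$ the tensor operation degenerates to a direct sum,
\[f_n^*(\bGn)\;\cong\;(f_n^*\bG[p^n])\otimes_\Z(\Z/p^n)^{\oplus 2^m}\;\cong\;(f_n^*\bG[p^n])^{\oplus 2^m},\]
and since $\DD$ is additive and commutes with the finite étale pullback $f_n$, applying it gives $f_n^*\DD(\bGn)\cong(f_n^*\cV/p^n)^{\oplus 2^m}\cong f_n^*\bigl((\cV/p^n)\otimes(\cT/p^n)\bigr)$, the last step because $f_n^*\cT/p^n$ is the trivial rank $2^m$ unit root crystal.

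To finish, I would check that this isomorphism is $\Gamma_n$-equivariant: on both sides the descent datum from $C'$ to $C$ is $\mathrm{id}\otimes\r_n$, since $\bH[p^n]$ and $\cT/p^n$ are classified by the same local system $\r_n$ via \cite{Crew}. Hence it descends to an isomorphism of crystals over $C$, and the Frobenius and Verschiebung match because the Frobenius on the tensor crystal is $F_\cV\otimes F_\cT$ by Corollary \ref{decomp of F}, which is exactly the one induced on $\bGn$ from $\bG$ and the étale $\bH$; here the étaleness of $\bH$ (so that $\cT$ is unit root and $F_\cT$ invertible) is what keeps $\bGn$ a BT group and the tensor a Dieudonné crystal, as recorded in the Remark preceding the statement. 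I expect the main obstacle to be precisely this last compatibility---verifying that $\DD$ intertwines $\otimes_\Z$ with the crystalline tensor product and respects $(F,V)$---for which the étaleness of $\bH$ and the explicit decomposition $F=\g\otimes\b$ are the essential inputs; the descent itself is then formal. Applying the equivalence of \cite{de J} to the resulting isomorphism of Dieudonné crystals yields $X[p^n]\cong\bGn$.
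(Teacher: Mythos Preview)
Your proposal is correct and follows essentially the same route as the paper: compute $\DD(\bGn)$ by pulling back along the finite \'etale cover $f_n$ trivializing $\bH[p^n]$, identify it there with $f_n^*(\cV/p^n\otimes\cT/p^n)$ via additivity, check the descent data agree, descend, and invoke the Dieudonn\'e equivalence. The only cosmetic differences are that the paper cites \cite[4.1.1]{BM} (full faithfulness over a base with local $p$-basis) rather than \cite{de J} for the final step, and leaves the $\Aut(C'/C)$-equivariance ``to the reader'' whereas you spell out that both descent data are $\mathrm{id}\otimes\r_n$.
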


\begin{proof}
We will show that $\mathbb{D}(\bGn)=\cV \otimes \cT/p^n=\cE/p^n$( \ref{decomp of F}) as Dieudonne crystals. Over $C'$, 
\[\tag{*}\DD_{C'}(f^*_n(\bGn))\cong f^*_n(\cV/p^n)^{\oplus m}\cong f^*_n \cV/p^n \otimes_{\cO_{\tilde C}} f^* \cT/p^n\] as Dieudonne crystals.
Both sides have effective descent datum with respect to $C' \ra C$. For any $g\in \mathrm{\Aut}(C'/C)$, $g^*$ acts on both of $f^*(\bGn)$ and $f^*_n \cV/p^n \otimes_{\cO_{\tilde C}} f^* \cT/p^n$ which is compatible with the functor $\DD_{C'}$: 
\[\xymatrix{
f^*(\bGn)\ar[r]^{g^*} \ar[d]^{\DD_{C'}}& f^*(\bGn) \ar[d]^{\DD_{C'}} \\
f^*(\cV \otimes \cT/p^n) \ar[r]^{g^*} & f^*(\cV \otimes \cT/p^n)
}
\] is commutative( we leave the details leave to the reader). Therefore the isomorphism (*) between effective descent datum also descends to $C$. 

Then we have 
\[\DD_{C}(\bGn)=(\cV \otimes \cT/p^n, F_\cV \otimes F_\cT, V_\cV \otimes F^{-1}_\cT).\] 

Since $C$ is smooth over an algebraically closed field $k$, it has locally $p$-basis. Therefore we can apply (\cite{BM}, 4.1.1), the Dieudonne functor is fully faithful, so 
\[\bGn\cong X[p^n].\]
\end{proof}

\begin{cor}
$\bG\otimes H=X[p^\infty] .$
\end{cor}

To complete the proof of \ref{main thm 1}, it remains to show the isomorphism in \ref{first order} lifts to $\tilde C$.

\begin{prop}
The curve $C$ is a versal deformation of the BT group $G$. 
\end{prop}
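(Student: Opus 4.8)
The plan is to read off the claim from the definition of versality. By the discussion following Definition~\ref{*p}, the group $G$ (or its lift $\tilde G$ over $\tilde C$) is versally deformed exactly when the Higgs field $\theta_G\colon \o_{G}\to t_{G^*}\otimes\O^1_C$ from the diagram above is an isomorphism. As $\cV$ is a crystal on $\tilde C$ and the Kodaira--Spencer map over $\tilde C$ reduces mod $p$ to $\theta_G$, a Nakayama argument (a map of line bundles on the proper $W(k)$-scheme $\tilde C$ which is an isomorphism on the special fibre is an isomorphism) reduces the statement to proving that $\theta_G$ is an isomorphism of line bundles on $C$.

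First I would reinterpret $\theta_G$ as a global section of the line bundle $\mathcal M:=\o_G^{-1}\otimes t_{G^*}\otimes\O^1_C$ and compute its degree. By Corollary~\ref{decomp of F} and Lemma~\ref{same filtration} we have $\o_G\cong \sL\otimes\sL'$ and $t_{G^*}\cong(\cV_1/\sL)\otimes\sL'$; since $\wedge^2\cV_1\cong\cO_C$ gives $\cV_1/\sL\cong\sL^{-1}$, the bundles $\sL'$ cancel and $\mathcal M\cong\sL^{\otimes -2}\otimes\O^1_C$. Thus $\deg\mathcal M=(2g-2)-2\deg\sL$, and it remains to see $2\deg\sL=2g-2$. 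This is the maximal Higgs relation for the uniformizing factor: over $\C$, Theorem 0.8 of \cite{Moller} says the Higgs field of the universal family is strictly maximal, and because the factors $\cV_2,\dots,\cV_{m+1}$ are unitary (compact) and carry no Higgs field, the surviving rank $2$ Higgs field is an isomorphism $\sL\xrightarrow{\ \sim\ }\sL^{-1}\otimes\O^1$, whence $\sL^{\otimes 2}\cong\O^1$ over $\C$. Degrees of line bundles and the genus are constant in the flat family $\tilde C/W(k)$, so the numerical identity $2\deg\sL=2g-2$ persists on $C$ and $\deg\mathcal M=0$.

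It then suffices to show the section $\theta_G$ is nonzero. Over $C$ the decomposition $\cE\cong\cV\otimes\cT$, the fact that the connection on $\cE$ is the tensor-product connection, and the triviality of the Hodge filtration on the unit-root crystal $\cT$ together give the factorization $\theta_\cE=\theta_G\otimes\mathrm{id}_\cT$; since $\cT$ has positive rank, $\theta_\cE$ and $\theta_G$ vanish at the same points. Now $\theta_\cE$ is the reduction of the strictly maximal Higgs field over $\C$, and the locus in the Lefschetz model over $R$ where it degenerates is closed and misses the characteristic zero fibre. Discarding the finitely many primes where this locus meets the whole special fibre---an admissible restriction on $p$, compatible with the ``infinitely many primes'' of \ref{main thm 1}---the map $\theta_\cE$, hence $\theta_G$, is an isomorphism at the generic point of $C$, so $\theta_G\neq 0$.

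Finally, a nonzero global section of a degree $0$ line bundle on a smooth connected proper curve has an effective divisor of zeros of degree $0$, hence vanishes nowhere; therefore $\theta_G$ is an isomorphism and $C$ is a versal deformation of $G$. I expect the non-vanishing step to be the main obstacle: one cannot deduce the isomorphism on all of $C$ directly from the characteristic zero statement, because reduction may create zeros of $\theta_G$ on the special fibre. The device that circumvents this is the degree computation $\deg\mathcal M=0$, which promotes ``isomorphism at the generic point'' to ``isomorphism everywhere''; this is why pinning down the relation $\sL^{\otimes 2}\cong\O^1$ is the crux.
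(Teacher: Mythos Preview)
Your argument is correct and arrives at the same conclusion via the same two inputs the paper uses---M\"oller's strictly maximal Higgs field and the factorization $\theta_{\cE}=\theta_G\otimes\mathrm{id}_{\cT}$---but you organize the reduction from characteristic~$0$ to characteristic~$p$ differently. The paper simply asserts that M\"oller's isomorphism $\theta:\omega\to\alpha\otimes\Omega^1$ over $\C$ persists on $\tilde C$ and on $C$ (in effect treating $\theta^{-1}$ as part of the Lefschetz data of \S\ref{Lefschetz principle}), and then the tensor factorization immediately gives that $\theta_G$ is an isomorphism. You instead split the specialization into a numerical part ($\deg(\sL^{-2}\otimes\Omega^1_C)=0$, which is automatic since degrees are constant in the flat family $\tilde C/W(k)$) and a generic non-vanishing part handled by spreading out; the degree-$0$ trick then upgrades ``nonzero'' to ``nowhere vanishing.'' This is a legitimate and more self-contained justification of the step the paper leaves implicit.

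Two minor remarks. First, your opening Nakayama reduction to $\tilde C$ is unnecessary: the proposition is about $C$, so it suffices to show $\theta_G$ is an isomorphism there; the lift to $\tilde C$ is handled separately in Proposition~\ref{lift along C}. Second, ``finitely many primes'' is slightly imprecise: what you are really doing is replacing $R$ by the open complement of the (proper, closed) image of the degeneracy locus of $\theta_1:\sL\to\sL^{-1}\otimes\Omega^1$ in $\Spec R$, which still surjects onto infinitely many primes of $\Z$. With that adjustment, and noting that your degree computation in fact forces $\{r:\theta_1|_{\mathcal C_r}=0\}$ to coincide with this closed image, the argument goes through cleanly.
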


\begin{proof} From (\cite[Theorem 0.9]{Moller}), any Shimura curve of Hodge type admits the maximal Higgs field. So $\tilde C$ and thus $C$ has maximal Higgs field:
\[\xymatrix{
0 \ar[r] & {\o_C} \ar@/_/[rrd] \ar[r]& \cE_C \ar[d]^{\bar\nabla }\\
 &&\cE_C \otimes \O^1_{C} \ar[r] &{\a_C}\otimes \O^1_{C} \ar[r]& 0
}\] the induce map $\theta: {\o_C} \ra {\a_C} \otimes \O^1_C$ is an isomorphism. 

By \ref{the filtration}, the filtration of $\cE_C$ comes from that of $\cV_C$. So $\theta=\theta_{{G}} \otimes \mathrm{Id}_\cT$. Therefore the Higgs field of $\cV_C/C$ is maximal and combining with \ref{same filtration}, it implies $\o_{G} \cong t_{G^*} \otimes \O^1_{C}$. By (\cite[A.2.3.6]{Ill}), $C$ is a versal deformation of the BT group $G$. 
\end{proof}

From (\cite[Theorem 1.1]{Xia}), such a curve $C$ admits a lifting to $W(k)$ over which $\bG$ admits a lifting as a BT group. 

\begin{prop} \label{lift along C}
The lifting of $C$ coincides with $\tilde C$. 
\end{prop}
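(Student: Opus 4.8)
The plan is to produce, over the Lefschetz lift $\tilde C$ itself, a BT group lifting $G$, and then to invoke the rigidity built into the versal deformation to conclude that this lift is forced to agree with the one furnished by \cite[Theorem 1.1]{Xia}. Thus the whole argument reduces to two points: first, that $G$ genuinely deforms over $\tilde C$; second, that a lift over which a versally deformed $G$ extends to a BT group is unique.

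For the first point I would argue by Grothendieck--Messing (crystalline) deformation theory. Since $\cha k>2$ (see \ref{p>2}), the closed immersion $C\hookrightarrow \tilde C$ is a divided power thickening with topologically nilpotent divided powers, so the crystal $\cV$ evaluated on $\tilde C$ yields a locally free $\cO_{\tilde C}$-module $\cV_{\tilde C}$ together with the sub line bundle $\mathrm{Fil}_{\cV}=\sL\otimes \sL'$. By \ref{same filtration} this filtration reduces modulo $p$ to the Hodge filtration $\o_{G}\subset \cV_C$ of $G$. Hence $\mathrm{Fil}_\cV$ is a lift of the Hodge filtration, and Grothendieck--Messing theory, compatibly with the de Jong equivalence \cite{de J}, produces a BT group $\tilde G$ over $\tilde C$ reducing to $G$, with Dieudonn\'e crystal $\cV$ and Hodge filtration $\mathrm{Fil}_\cV$. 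This already exhibits $\tilde C$ as a lift of $C$ over which $G$ extends to a BT group.

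For the second point I would use that $C$ versally deforms $G$, i.e.\ the Kodaira--Spencer map $T_C\to t_{G}\otimes t_{G^*}$ is an isomorphism (equivalently the Higgs field is maximal). This means the classifying map from $C$ to the mod-$p$ versal deformation space of $G$ is formally \'etale at every point. Giving a lift of $C$ together with a lift of $G$ amounts to lifting this classifying map to the canonical $W(k)$-versal deformation space, and by the unique lifting property of formally \'etale morphisms such a lift is unique. Consequently the lift produced in \cite[Theorem 1.1]{Xia} is the unique lift over which $G$ extends, and by the previous paragraph $\tilde C$ is such a lift; therefore the two coincide.

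The main obstacle is the uniqueness in the last paragraph: one must make precise that versality (the maximal Higgs field) turns the classifying map into a formally \'etale map, and that lifting the BT group is equivalent to lifting this map into the canonical characteristic-zero versal space, so that the rigidity of \'etale lifts applies globally along $C$ rather than merely formally at each point. Once this rigidity is in place, matching $\tilde C$ with the lift of \cite{Xia} via the Grothendieck--Messing realization of $\mathrm{Fil}_\cV$ is immediate.
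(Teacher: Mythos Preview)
Your proposal is correct and follows essentially the same route as the paper: construct a lift of $G$ over $\tilde C$ via Grothendieck--Messing by lifting the Hodge filtration to $\mathrm{Fil}_\cV=\sL\otimes\sL'$ (using \ref{same filtration}), and then appeal to the uniqueness of the lift of a versally deformed height-2 BT group furnished by \cite[Theorem~1.1]{Xia}. The paper's proof is simply the terse version of this: it cites Messing for the first step and \cite[Theorem~1.1]{Xia} as a black box for the second, whereas you unpack the uniqueness as rigidity of a formally \'etale classifying map; but this is exactly the content behind the cited result, not a different argument.
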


\begin{proof}
From (\cite[ V, Theorem 1.6]{Messing}), it is known that the lifting of the BT group $G$ is equivalent to lifting the filtration $\o_{G} \hookrightarrow \cV_C$. By \ref{same filtration}, it is equivalent to lifting $\sL \otimes \sL' \hookrightarrow \cV_C$ and hence the curve $\tilde C/W(k)$ admits a lifting of $G$. From \cite[Theorem 1.1]{Xia}, we know the lifting of $G$ is unique. 
\end{proof}

Let $\tilde G$ be the lifting of $G$ on $\tilde C$ and $\tilde H$ be the lifting of $H$ on $\tilde C$. Since $H$ is etale, $\tilde H$ is etale and unique up to isomorphism. By Proposition A.3 in \cite{Xia}, $\tilde G \otimes \tilde H$ is a BT group.  

\begin{prop}
\[\tilde X[p^\infty]\cong \tilde G \otimes {\tilde H}.\]
\end{prop}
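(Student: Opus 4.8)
The plan is to prove that $\tilde X[p^\infty] \cong \tilde G \otimes \tilde H$ by reducing to the already-established special-fiber statement \ref{first order} and using the uniqueness of deformations. We know from the previous proposition that $\tilde X[p^\infty]$ and $\tilde G \otimes \tilde H$ are both BT groups over $\tilde C$ whose special fibers over $C$ are isomorphic: indeed $X[p^\infty] \cong G \otimes H$ by the corollary following \ref{first order}, which identifies both sides with the BT group attached to $\DD(X/C) \cong \cV_C \otimes \cT_C$. So the task is to lift this isomorphism from $C$ to $\tilde C$.

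First I would show that both $\tilde X[p^\infty]$ and $\tilde G \otimes \tilde H$ are deformations of $G \otimes H$ over the lifting $\tilde C$ of $C$. On the one hand, $\tilde X[p^\infty]$ is a BT group over $\tilde C$ reducing to $X[p^\infty] \cong G \otimes H$. On the other hand, $\tilde G$ and $\tilde H$ are the (chosen, resp.\ unique) liftings of $G$ and $H$ along $\tilde C$ furnished by \ref{lift along C} and the surrounding discussion, and $\tilde G \otimes \tilde H$ is a BT group by Proposition A.3 in \cite{Xia}; it reduces to $G \otimes H$ as well. Thus both are objects of the deformation functor of $G \otimes H$ over the fixed base $\tilde C$.

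The key step is then to invoke the uniqueness of the lifting. Since $H$ is etale, its lifting $\tilde H$ is unique up to isomorphism, so the $\cT$-factor is rigid. The essential input is that $C$ is a versal deformation of $G$ (the preceding proposition) together with \cite[Theorem 1.1]{Xia}, which guarantees that $G$ lifts uniquely along $\tilde C$; this is precisely what pins down $\tilde G$ and forces any BT group over $\tilde C$ reducing to $G \otimes H$ and compatible with the tensor decomposition to agree with $\tilde G \otimes \tilde H$. Concretely, by Messing's theory (\cite[V, Theorem 1.6]{Messing}) a deformation of a BT group is equivalent to a lift of its Hodge filtration inside the evaluated Dieudonne crystal; since $\DD(\tilde X/\tilde C) \cong \cV \otimes \cT$ by \ref{decomp of F} and the Hodge filtration of $\cE$ comes from $\mathrm{Fil}_\cV$ on the $\cV$-factor alone (again \ref{decomp of F}, \ref{same filtration}), the filtration on $\tilde X[p^\infty]$ is exactly the tensor of the filtration defining $\tilde G$ with the trivial filtration on $\tilde H$. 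This is the filtration defining $\tilde G \otimes \tilde H$, so the two lifts coincide.

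The main obstacle I expect is verifying that the tensor construction is compatible with Messing's filtration-lifting correspondence, i.e.\ that the Hodge filtration of $\tilde X[p^\infty]$ really does decompose as $\mathrm{Fil}_{\tilde G} \otimes \mathrm{Fil}_{\tilde H}$ and not merely on the special fiber. This requires knowing that $\DD(\tilde G \otimes \tilde H) \cong \DD(\tilde G) \otimes \DD(\tilde H)$ as filtered crystals over $\tilde C$ (not just over $C$), which should follow from the same descent argument used in the proof of \ref{first order}: one checks the isomorphism after the finite etale cover $C' \to C$ trivializing $\tilde H$, where $\tilde G \otimes \tilde H$ becomes a direct sum of copies of $\tilde G$, and then descends along $\mathrm{Aut}(C'/C)$ using full faithfulness of the Dieudonne functor over the smooth base. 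Once this compatibility is in hand, the uniqueness in \cite[Theorem 1.1]{Xia} closes the argument.
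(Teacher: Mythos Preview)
Your proposal is correct and takes essentially the same approach as the paper: both $\tilde X[p^\infty]$ and $\tilde G \otimes \tilde H$ lift $G\otimes H$ to $\tilde C$, and by Messing's theorem (\cite[V, Theorem 1.6]{Messing}) such liftings are determined by the lift of the Hodge filtration, which in both cases is $\o_{\tilde G}\otimes \cT \hookrightarrow \cE$ by \ref{decomp of F} and \ref{same filtration}. The paper's proof is terser and does not pause over the compatibility $\DD(\tilde G\otimes\tilde H)\cong \DD(\tilde G)\otimes\DD(\tilde H)$ as filtered objects that you rightly flag; your appeal to \cite[Theorem 1.1]{Xia} is not needed here, since Messing's filtration criterion already gives the uniqueness directly.
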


\begin{proof}
In \ref{first order}, we have shown that $X[p^\infty] \cong G \otimes H$ as BT groups over $C$. Both sides are liftable to $\tilde C$( \ref{lift along C}), induced by the same filtration $\o_{\tilde G} \otimes \cT \hookrightarrow \cE$(\ref{same filtration}). Again by  \cite[V Theorem 1.6]{Messing}, $\tilde X[p^\infty]\cong \tilde G \otimes \tilde H$. 
\end{proof}

Now the proof of \ref{main thm 1} is complete. 


\bibliographystyle{amsplain}
\bibliography{mybib}{}

\end{document}